\DeclareMathOperator{\Ext}{\mathrm{Ext}}
\DeclareMathOperator{\ann}{\mathrm{ann}}
\DeclareMathOperator\ca{\mathrm{ca}}
\DeclareMathOperator{\m}{\mathfrak{m}}
\DeclareMathOperator{\module}{\mathrm{mod}}
\DeclareMathOperator{\MCM}{\mathrm{MCM}}
\DeclareMathOperator{\sMCM}{\underline{\MCM}}
\DeclareMathOperator{\spec}{\mathsf{Spec}}
\DeclareMathOperator{\sing}{\mathrm{Sing}}
\newtheorem{theorem}{Theorem}[section]
\newtheorem*{theorem*}{Theorem}
\newtheorem{lemma}[theorem]{Lemma}
\newtheorem*{lemma*}{Lemma}
\newtheorem{proposition}[theorem]{Proposition}
\newtheorem*{proposition*}{Proposition}
\newtheorem{corollary}[theorem]{Corollary}
\newtheorem*{corollary*}{Corollary}
\theoremstyle{definition}
\newtheorem{definition}[theorem]{Definition}
\newtheorem*{definition*}{Definition}
\newtheorem{example}[theorem]{Example}
\newtheorem{remark}[theorem]{Remark}
\newtheorem{convention}[theorem]{Convention}
\numberwithin{equation}{theorem}
\newtheorem{introtheorem}{Theorem}
\newtheorem{introcorollary}[introtheorem]{Corollary}
\titleformat{\chapter}[block]{\Huge\scshape\bf\centering}{\thechapter.}{1em}{} % Change the look of the section titles
\titleformat{\section}[block]{\large\scshape\bf\centering}{\thesection.}{1em}{} % Change the look of the section titles
\titleformat{\subsection}[block]{\scshape\bf\centering}{\thesubsection.}{1em}{} % Change the look of the section titles
\def\A{\mathcal{A}}
\def\aa{\boldsymbol{a}}
\def\add{\operatorname{add}}
\def\Ann{\operatorname{ann}}
\def\bb{\boldsymbol{b}}
\def\C{\mathsf{C}}
\def\CC{\mathbb{C}}
\def\cone{\operatorname{cone}}
\def\db{\mathsf{D^b}}
\def\ds{\mathsf{D_{sg}}}
\def\e{\operatorname{e}}
\def\End{\operatorname{End}}
\def\Jac{\operatorname{jac}}
\def\k{\operatorname{K}}
\def\lten{\otimes^{\mathbf{L}}}
\def\mod{\operatorname{mod}}
\def\N{\mathbb{N}}
\def\n{\mathfrak{n}}
\def\smd{\operatorname{\mathsf{smd}}}
\def\T{\mathcal{T}}
\def\X{\mathcal{X}}
\def\xx{\boldsymbol{x}}
\def\Y{\mathcal{Y}}
\def\Z{\mathcal{Z}}
\def\ZZ{\mathbb{Z}}
\author[\"{O}.~Esentepe]{\"{O}zg\"{u}r Esentepe}
\address{School of Mathematics, University of Leeds, Leeds, UK}
\email{o.esentepe@leeds.ac.uk}
\author[R.~Takahashi]{Ryo Takahashi}
\address{Graduate School of Mathematics, Nagoya University, Furocho, Chikusaku, Nagoya, 464-8602, Japan}
\email{takahashi@math.nagoya-u.ac.jp}
\urladdr{https://www.math.nagoya-u.ac.jp/~takahashi/}
\title{Annihilators and decompositions of singularity categories}
\subjclass[2020]{13D09, 13C14}
\keywords{singularity category, (Rouquier) dimension, cohomology annihilator, stable category, maximal Cohen--Macaulay module, Koszul complex}
\thanks{R.T. was partly supported by JSPS Grant-in-Aid for Scientific Research 23K03070. \"{O}.E. was supported by the Engineering and Physical Sciences Research Council [grant number EP/W007509/1].}
\begin{document}

\begin{abstract}
Given any commutative Noetherian ring $R$ and an element $x$ in $R$, we consider the full subcategory $\C(x)$ of its singularity category consisting of objects for which the morphism that is given by the  multiplication by $x$ is zero. Our main observation is that we can establish a relation between $\C(x), \C(y)$ and $\C(xy)$ for any two ring elements $x$ and $y$. Utilizing this observation, we obtain a decomposition of the singularity category and consequently an upper bound on the dimension of the singularity category. 
\end{abstract}

\maketitle

\thispagestyle{empty}

\section{Introduction}

 Hilbert's syzygy theorem is one of the most beautiful theorems in commutative algebra and algebraic geometry. It is considered to be the introduction of homological methods into this area. In today's language, Hilbert's theorem states that over a polynomial ring with $n$ variables over a field, any finitely generated module is quasi-isomorphic to a perfect complex of length at most $n$. During the first half of the twentieth century, the goal was to answer this question for other rings: over which rings can we say that \textit{any finitely generated module} is quasi-isomorphic to a perfect complex? And during the second half of the century, with the language and the rising importance of derived categories, the question became: over which rings can we say that \textit{any bounded complex of finitely generated modules} is quasi-isomorphic to a perfect complex? This question motivates the definition of the \textit{singularity category} (or the \textit{stabilized derived category}) of a Noetherian ring which is defined as the Verdier quotient of the bounded derived category of finitely generated modules by the (full) subcategory of perfect complexes. Note that this quotient vanishes if and only if the answer to the above question is affirmative. The name singularity category suggests that there is a geometric connection. Indeed, due to the work of Auslander, Buchsbaum and Serre, we know that the singularity category $\ds(R)$ of a commutative Noetherian ring $R$ vanishes if and only if $R$ is regular. Thus, it provides a measure of the singularities of the affine space $\spec(R)$.

The goal of this paper is to further our understanding of the structure of the singularity category of a commutative Noetherian ring $R$. Our approach uses the annihilators of the singularity category and our main observation is the following: for any $x,y \in R$, we have 
\begin{align}\label{main-observation}
    \C(xy) = \smd(\C(x) \ast \C(y)).
\end{align}
Let us explain the notation. For each $r\in R$ we denote by $\C(r)$ the subcategory of $\ds(R)$ consisting of objects $X$ such that the morphism $X\xrightarrow{r}X$ in $\ds(R)$ given by the multiplication by $r$ is zero. The subcategory $\C(x) \ast \C(y)$ consists of all objects $E$ that fits into an exact triangle $X \to E \to Y \to X[1] $ in $\ds(R)$ with $X \in \C(x)$ and $Y \in \C(y)$. Finally, by $\smd(\C(x) \ast \C(y))$, we denote the subcategory consisting of all direct summands of objects belonging to $\C(x) \ast \C(y)$. This observation is motivated by a result of Dugas and Leuschke \cite{DL} who proved the special case $\C(z^{m+n}) = \smd(\C(z^m) \ast \C(z^n))$ where $R =S / (f+z^k)$ is the $k$-fold branched cover of a hypersurface singularity $S/(f)$. We note that if $\C(xy) = \ds(R)$, then (\ref{main-observation}) gives a decomposition of the singularity category. This is the first part of our main theorem. In fact, using the associativity of the $\ast$ operator, we have the following.

\begin{introtheorem}
Let $x_1,\dots,x_n$ be elements of a commutative Noetherian ring $R$ such that the product $x_1\cdots x_n$ belongs to $\Ann\ds(R)$.
Then
$$
\ds(R)=\smd(\C(x_1)\ast\cdots\ast\C(x_n)).
$$
\end{introtheorem}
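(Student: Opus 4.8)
The plan is to derive Theorem~A from the main observation \eqref{main-observation} by an induction on $n$, using the associativity of $\ast$ together with one bookkeeping lemma about direct summands. Two preliminary remarks simplify matters. First, each $\C(r)$ is a full subcategory of $\ds(R)$ closed under finite direct sums and under direct summands: multiplication by $r$ is a natural transformation of the identity functor of $\ds(R)$, so whenever it vanishes on an object it vanishes on every retract of that object and on every finite direct sum of copies of it. Second, the hypothesis $x_1\cdots x_n\in\Ann\ds(R)$ says precisely that $X\xrightarrow{x_1\cdots x_n}X$ is the zero morphism for every $X\in\ds(R)$, i.e.\ $\C(x_1\cdots x_n)=\ds(R)$. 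So it suffices to prove the formal identity $\smd(\C(x_1)\ast\cdots\ast\C(x_n))=\C(x_1\cdots x_n)$ for arbitrary $x_1,\dots,x_n\in R$, where the iterated $\ast$ is unambiguous because $\ast$ is associative (a standard consequence of the octahedral axiom).

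The inductive step rests on the following lemma: for any full subcategories $\mathcal{A},\mathcal{B}$ of a triangulated category, $\smd(\smd(\mathcal{A})\ast\mathcal{B})=\smd(\mathcal{A}\ast\mathcal{B})$. The inclusion $\supseteq$ is clear from $\mathcal{A}\ast\mathcal{B}\subseteq\smd(\mathcal{A})\ast\mathcal{B}$. For $\subseteq$, let $W$ be a direct summand of an object $E$ that sits in a triangle $P\to E\to N\to P[1]$ with $N\in\mathcal{B}$ and $P$ a direct summand of some $A\in\mathcal{A}$; write $A\cong P\oplus P'$. Forming the direct sum of this triangle with the split triangle $P'\xrightarrow{\mathrm{id}}P'\to 0\to P'[1]$ yields a triangle $A\to E\oplus P'\to N\to A[1]$, so $E\oplus P'\in\mathcal{A}\ast\mathcal{B}$; as $W$ is a summand of $E$, hence of $E\oplus P'$, we get $W\in\smd(\mathcal{A}\ast\mathcal{B})$.

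Now I would induct on $n$. For $n=1$ the identity reads $\smd(\C(x_1))=\C(x_1)$, which holds since $\C(x_1)$ is summand-closed. For $n>1$, regrouping by associativity, then applying the lemma with $\mathcal{B}=\C(x_n)$, then the inductive hypothesis for $x_1,\dots,x_{n-1}$, and finally \eqref{main-observation} with $x=x_1\cdots x_{n-1}$ and $y=x_n$, we get
\begin{align*}
\smd\bigl(\C(x_1)\ast\cdots\ast\C(x_n)\bigr)
&=\smd\bigl(\smd(\C(x_1)\ast\cdots\ast\C(x_{n-1}))\ast\C(x_n)\bigr)\\
&=\smd\bigl(\C(x_1\cdots x_{n-1})\ast\C(x_n)\bigr)\\
&=\C(x_1\cdots x_n).
\end{align*}
Together with $\C(x_1\cdots x_n)=\ds(R)$ this proves the theorem. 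Granting \eqref{main-observation} and the associativity of $\ast$, the argument is entirely formal, and I expect the only point needing care to be the lemma — i.e.\ checking that enlarging $\mathcal{A}$ to $\smd(\mathcal{A})$ before taking $\ast$ does not enlarge the $\smd$-closure of $\mathcal{A}\ast\mathcal{B}$, which the split-triangle trick above settles.
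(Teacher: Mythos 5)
Your argument is correct and follows essentially the same route as the paper: your bookkeeping lemma is the paper's Lemma~\ref{2} (proved there by the same split-triangle trick), and your induction on $n$ via associativity of $\ast$ is exactly the first paragraph of the proof of Proposition~\ref{5}, after which the hypothesis $x_1\cdots x_n\in\Ann\ds(R)$ gives $\C(x_1\cdots x_n)=\ds(R)$ as in Theorem~\ref{7}. The only content you take on faith is the two-element identity \eqref{main-observation} itself (proved in the paper via the octahedral axiom and the Koszul complex $\k(x)$), which matches the paper's own logical structure, where Theorem~A is presented as a formal consequence of that observation.
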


The second part of our main theorem concerns the dimension of the singularity category of a commutative Noetherian ring. Recall that the \emph{(Rouquier) dimension} $\dim\T$ of a triangulated category $\T$ is an invariant which measures how much it costs to build it from a single object using the ``free" operations of finite direct sums, direct summands and shift and the cone operation which costs ``1 unit" each time it is applied.

\begin{introtheorem}
Let $x_1,\dots,x_n$ be elements of a commutative Noetherian ring $R$ such that the product $x_1\cdots x_n$ belongs to $\Ann\ds(R)$. If no $x_i$ is a unit or a zerodivisor, then there is an inequality
$$
\dim\ds(R)\le\sum_{i=1}^n\dim\ds(R/x_iR)+n-1.
$$
\end{introtheorem}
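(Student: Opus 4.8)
The plan is to bound each piece $\C(x_i)$ appearing in the decomposition $\ds(R)=\smd(\C(x_1)\ast\cdots\ast\C(x_n))$ in terms of the singularity category of $R/x_iR$, and then feed this into the ``ball'' calculus of Rouquier dimension. The crucial input is the following claim: \emph{if $x\in R$ is a nonzerodivisor that is not a unit, then every object of $\C(x)$ is a direct summand of an object in the essential image of the restriction functor $\mathrm{res}\colon\ds(R/xR)\to\ds(R)$.} Granting this, the theorem becomes a formal computation.

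To prove the claim I would first note that $\mathrm{res}$ is well defined: since $x$ is a nonzerodivisor, the resolution $0\to R\xrightarrow{x}R\to R/xR\to 0$ gives $\mathrm{pd}_RR/xR=1$, so any finitely generated projective $R/xR$-module has projective dimension $\le 1$ over $R$; hence a bounded complex of such modules is perfect over $R$, and therefore restriction of scalars sends perfect complexes over $R/xR$ to perfect complexes over $R$ and descends to the singularity categories. Now take $X\in\C(x)$ and consider the Koszul complex $K=K(x;R)=(R\xrightarrow{x}R)$, which is quasi-isomorphic to $R/xR$ because $x$ is a nonzerodivisor. The exact triangle $X\xrightarrow{x}X\to X\lten_R K\to X[1]$ in $\db(R)$ has its first morphism equal to zero in $\ds(R)$, which is exactly the condition $X\in\C(x)$, so the triangle splits there and $X$ is a direct summand of $X\lten_R K\simeq X\lten_R R/xR$ in $\ds(R)$. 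Finally, computing $X\lten_R R/xR$ with a bounded-above complex of finitely generated free modules $P\xrightarrow{\simeq}X$ exhibits it as $P\otimes_R R/xR$, a complex of $R/xR$-modules with bounded, finitely generated homology, hence an object $\overline X\in\db(R/xR)$ with $\mathrm{res}(\overline X)=X\lten_R R/xR$ in $\ds(R)$. This proves the claim.

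To conclude, we may assume $d_i:=\dim\ds(R/x_iR)<\infty$ for every $i$, since otherwise the asserted inequality is vacuous. Choose, in Rouquier's notation, $G_i\in\ds(R/x_iR)$ with $\ds(R/x_iR)=\langle G_i\rangle_{d_i+1}$, and set $G=\bigoplus_{i=1}^n\mathrm{res}(G_i)\in\ds(R)$. As $\mathrm{res}$ is a triangle functor it sends $\langle G_i\rangle_{d_i+1}$ into $\langle\mathrm{res}(G_i)\rangle_{d_i+1}\subseteq\langle G\rangle_{d_i+1}$; since these balls are closed under direct summands, the claim gives $\C(x_i)\subseteq\langle G\rangle_{d_i+1}$ for each $i$. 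Using the decomposition $\ds(R)=\smd(\C(x_1)\ast\cdots\ast\C(x_n))$ together with the inclusions $\langle G\rangle_a\ast\langle G\rangle_b\subseteq\langle G\rangle_{a+b}$ (iterated using associativity of $\ast$) and the summand-closedness of balls, we obtain
$$\ds(R)=\smd\bigl(\C(x_1)\ast\cdots\ast\C(x_n)\bigr)\subseteq\langle G\rangle_{(d_1+1)+\cdots+(d_n+1)}=\langle G\rangle_{d_1+\cdots+d_n+n},$$
whence $\dim\ds(R)\le d_1+\cdots+d_n+n-1=\sum_{i=1}^n\dim\ds(R/x_iR)+n-1$.

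I expect the main obstacle to be the claim, and within it the two well-definedness points: that $\mathrm{res}$ actually preserves perfection — this is exactly where the hypothesis ``not a zerodivisor'' enters, through $\mathrm{pd}_RR/xR=1$ — and that the a priori $\db(R)$-object $X\lten_R R/xR$ really descends to $\db(R/xR)$. The hypothesis that no $x_i$ is a unit only serves to rule out the degenerate case $R/x_iR=0$, where $\dim\ds(R/x_iR)$ is not meaningful (and where $\C(x_i)=0$ in any case). Everything after the claim is bookkeeping with the ball calculus and the decomposition theorem.
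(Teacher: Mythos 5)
Your proposal is correct and follows essentially the same route as the paper: the same decomposition $\ds(R)=\smd(\C(x_1)\ast\cdots\ast\C(x_n))$, the same identification of $\C(x_i)$ with summands of objects restricted from $\ds(R/x_iR)$ (your ``claim'' is precisely the paper's Lemma \ref{4}(1) combined with $\k(x_i)\simeq R/x_iR$ for a nonzerodivisor), and the same ball calculus to conclude. The only differences are cosmetic: you re-derive that lemma inline and assemble the generator $G=\bigoplus_i\mathrm{res}(G_i)$ at the outset rather than at the end.
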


In these theorems, $\Ann\ds(R)$ is the \emph{annihilator of the singularity category} which is defined as the ideal of $R$ consisting of elements $r$ such that $\C(r) = \ds(R)$. We prove these theorems in Section 2 after providing the necessary background and preliminaries. It is worth comparing them with similar results shown in \cite{dec,sgd}.

In Section 3, we consider the case of a sequence elements. After proving a weaker version of Theorem A, we prove the following.

\begin{introtheorem}
    Let $\xx = x_1,  \ldots, x_n$ a regular sequence on $R$ such that $x_1^{m_1}, \ldots, x_n^{m_n} \in  \Ann \ds(R)$ for some positive integers $m_1, \ldots, m_n$. Assume that $a_1, \cdots, a_n$ are nonnegative integers such that $x_1^{a_1}\cdots x_n^{a_n} \in \Ann \ds(R)$. Put $\omega = m_1 \cdots m_n \left( \dfrac{a_1}{m_1} + \cdots + \dfrac{a_n}{m_n} \right)$. Then, we have
\begin{align*}
    \dim \ds(R) \leq \omega \left( \dim \ds(R/\xx R) + 1 \right) - 1.
\end{align*}
In particular, $\dim \ds(R) \leq m\,( \dim \ds(R/\xx R) + 1) - 1$, where $m=m_1\cdots m_n$.
\end{introtheorem}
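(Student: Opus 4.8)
\emph{Proof proposal.} The plan is to argue by induction on $n$, reducing the whole statement to a single estimate that bounds $\dim\C(x_i)$ in terms of $\dim\ds(R/\xx R)$. Write $M=m_1\cdots m_n$ and $D=\dim\ds(R/\xx R)$; the asserted inequality is vacuous when $D=\infty$, so assume $D<\infty$. Since the argument only uses that each $x_i$ is a nonzerodivisor on $R$ and that $\{x_j\}_{j\neq i}$ is again a regular sequence on $R/x_iR$, I work throughout under the assumption that $\xx$ is a permutable regular sequence (automatic, e.g., when $R$ is local).

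First I would produce a decomposition of $\ds(R)$. As $x_1^{a_1}\cdots x_n^{a_n}\in\Ann\ds(R)$, Theorem~A applied to the list consisting of $a_i$ copies of $x_i$ ($i=1,\dots,n$) --- equivalently, the iterate of the main observation \eqref{main-observation} --- gives
\begin{align*}
\ds(R)=\C\big(x_1^{a_1}\cdots x_n^{a_n}\big)=\smd\big(\C(x_1)^{\ast a_1}\ast\cdots\ast\C(x_n)^{\ast a_n}\big).
\end{align*}
Since $\dim$ is unchanged by taking direct summands and increases by at most one under each application of $\ast$, this yields $\dim\ds(R)\le\sum_{i=1}^n a_i\,\dim\C(x_i)+\big(\sum_{i=1}^n a_i\big)-1$. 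Hence the theorem follows once I establish, for every $i$, the estimate $\dim\C(x_i)\le\frac{M}{m_i}(D+1)-1$: substituting and using $\sum_i a_i\cdot\frac{M}{m_i}=\omega$ makes the $\sum_i a_i$ summands cancel and leaves exactly $\dim\ds(R)\le\omega(D+1)-1$. The ``in particular'' clause is then the case $a_1=m_1$, $a_2=\cdots=a_n=0$, where $\omega=M=m$.

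The estimate on $\dim\C(x_i)$ is proved by induction on $n$. For $n=1$ it is immediate: from $x_1^{a_1}\in\Ann\ds(R)$ one gets $\ds(R)=\smd(\C(x_1)^{\ast a_1})$, and because $x_1$ is a nonzerodivisor the triangle $X\xrightarrow{x_1}X\to X\lten_R R/x_1R\to X[1]$ exhibits each $X\in\C(x_1)$ as a direct summand of the image in $\ds(R)$ of an object of $\ds(R/x_1R)$, so $\dim\C(x_1)\le\dim\ds(R/x_1R)=D=\frac{M}{m_1}(D+1)-1$. For the inductive step the same triangle gives $\dim\C(x_i)\le\dim\ds(R/x_iR)$, and I would bound the right-hand side by applying the theorem --- known for $n-1$ elements --- to the ring $R/x_iR$ with the regular sequence $\{\bar x_j\}_{j\neq i}$. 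Here $R/x_iR$ modulo this sequence is $R/\xx R$; I need $\bar x_j^{m_j}\in\Ann\ds(R/x_iR)$ for all $j$, which is the descent statement discussed below; and I choose on $R/x_iR$ the exponent vector with a single nonzero entry equal to some $m_{j_0}$ (with $j_0\neq i$), so that the quantity ``$\omega$'' of this $(n-1)$-element instance equals $\prod_{j\neq i}m_j=M/m_i$. The $(n-1)$-case then gives $\dim\ds(R/x_iR)\le\frac{M}{m_i}(D+1)-1$, completing the induction.

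The point I expect to require the most care --- the main obstacle --- is the descent statement: \emph{if $t\in\Ann\ds(R)$ and $x$ is a nonzerodivisor on $R$, then $\bar t\in\Ann\ds(R/xR)$.} I would prove this by taking any object $Z$ of $\ds(R/xR)$, viewing it in $\ds(R)$, and choosing a factorization $Z\xrightarrow{\alpha}P\xrightarrow{\beta}Z$ of multiplication by $t$ through a perfect $R$-complex $P$ (possible since $t$ annihilates $\ds(R)$, so this multiplication becomes zero in the Verdier quotient $\ds(R)$); applying the base-change functor $-\lten_R R/xR$ gives a factorization of multiplication by $\bar t$ on $Z\lten_R R/xR$ through the perfect $R/xR$-complex $P\lten_R R/xR$; and since $x$ acts as zero on the $R/xR$-complex $Z$ one has $Z\lten_R R/xR\cong Z\oplus Z[1]$, from which multiplication by $\bar t$ on $Z$ itself is zero in $\ds(R/xR)$. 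Apart from this lemma, the proof is a matter of assembling the main observation \eqref{main-observation}, the elementary comparison of $\C(x)$ with $\ds(R/xR)$ for a nonzerodivisor $x$ (already used for Theorem~B), the standard subadditivity of Rouquier dimension under $\ast$ together with its invariance under summands, and careful bookkeeping of the exponents $m_i$ and $a_i$ across the induction.
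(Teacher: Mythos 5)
Your argument is correct, but it follows a genuinely different route from the paper's. The paper never leaves the ring $R$: it works with the intersection subcategory $\C(\xx)=\bigcap_i\C(x_i)$, proves via a Koszul-complex octahedron that $\C(x_1,\dots,x_n,yz)\subseteq\smd(\C(x_1,\dots,x_n,y)\ast\C(x_1,\dots,x_n,z))$, deduces the combinatorial bound $\C(x_1^{m_1},\dots,x_n^{m_n})\subseteq\langle\C(\xx)\rangle_{m_1\cdots m_n}$, and then uses $\C(x_j^{m_j})=\ds(R)$ to convert this into $\C(x_i^{a_i})\subseteq\langle\C(\xx)\rangle_{\omega_i}$; only at the very end is $\C(\xx)$ compared with $\ds(R/\xx R)$ via $\k(\xx)\simeq R/\xx R$. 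You instead induct on $n$ through the intermediate quotients $R/x_iR$, which forces you to prove the descent statement that $t\in\Ann\ds(R)$ implies $\bar t\in\Ann\ds(R/xR)$ for a nonzerodivisor $x$; your proof of that lemma (factor $t\cdot\mathrm{id}_Z$ through a perfect complex, apply $-\lten_RR/xR$, and split off $Z$ from $Z\lten_RR/xR\cong Z\oplus Z[1]$) is sound, and the lemma is of independent interest. The bookkeeping also checks out: $\sum_ia_i\bigl(\tfrac{M}{m_i}(D+1)-1\bigr)+\sum_ia_i-1=\omega(D+1)-1$. What your route costs is generality: to apply the inductive hypothesis to $R/x_iR$ with the sequence $\{\bar x_j\}_{j\ne i}$ you need $\xx$ to be permutable (and indeed you need each $x_i$ to be a nonzerodivisor on $R$ itself, which is not automatic for a regular sequence over a non-local ring), whereas the paper's argument uses only that $\k(\xx)\simeq R/\xx R$ and so works for an arbitrary regular sequence on any commutative Noetherian ring. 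You acknowledge this restriction, and it is harmless in the local setting where all of the paper's applications live; what your route buys in exchange is that it avoids the explicit computation $\k(\xx)\otimes C\cong\bigoplus_{i=0}^n(C[i])^{\oplus\binom{n}{i}}$ and the auxiliary subcategories $\C(x_1,\dots,x_n,y)$, replacing them with a transparent recursion plus the descent lemma.
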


We dedicate Section 4 to examples and applications of these theorems. We compute several examples and discuss the case of isolated singularities: When $\dim \ds(R)$ is finite, the vanishing locus of $\Ann \ds(R)$ is the singular locus of $R$. In particular, if $(R, \m)$ is a local ring with an isolated singularity for which $\dim \ds(R) < \infty$, then $\Ann \ds(R)$ is $\m$-primary. In this case, for any $x \in \m$, we have $x^n \in \Ann \ds(R)$ for some positive integer $n$. If we define $\alpha(x) = \min\{ n \mid x^n \in \Ann \ds(R) \}$, then we have the following corollary.

\begin{introcorollary}
Let $(R, \m)$ be a commutative Noetherian local ring with an isolated singularity. If $\dim \ds(R) < \infty$, then 
\begin{align*}
    \dim \ds(R) \leq \inf\{\alpha (x)(\dim\ds(R/xR)+1) - 1\},
\end{align*}
where $x$ runs over the nonzerodivisors in $\m$.
In particular, if $\dim \ds(R/xR) = 0$ for some nonzerodivisor $x\in\m$, then $\dim \ds(R) \leq \alpha(x) - 1$.
\end{introcorollary}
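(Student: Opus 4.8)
The plan is to read Corollary D off from the case $n=1$ of Theorem C, the only preliminary point being that $\alpha(x)$ is finite. First I would invoke the discussion preceding the statement: since $R$ has an isolated singularity and $\dim\ds(R)<\infty$, the vanishing locus of $\Ann\ds(R)$ equals the singular locus $\{\m\}$ of $R$, so $\Ann\ds(R)$ is $\m$-primary. Consequently, for every $x\in\m$ some power $x^n$ lies in $\Ann\ds(R)$, and hence $\alpha(x)=\min\{n\mid x^n\in\Ann\ds(R)\}$ is a well-defined positive integer; this is precisely what licenses the application of Theorem C below.

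Next, fix a nonzerodivisor $x\in\m$. Because $R$ is local, $x$ is also a nonunit, so the length-one sequence $\xx:=x$ is a regular sequence on $R$, and by the previous paragraph $x^{m_1}\in\Ann\ds(R)$ with $m_1:=\alpha(x)$. Applying the ``in particular'' part of Theorem C (equivalently, the general bound with $n=1$ and $a_1=m_1=\alpha(x)$, for which $\omega=m_1\cdot\frac{a_1}{m_1}=\alpha(x)$) gives
$$
\dim\ds(R)\le\alpha(x)\bigl(\dim\ds(R/xR)+1\bigr)-1 .
$$
Taking the infimum over all nonzerodivisors $x\in\m$ yields the first inequality. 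For the ``in particular'' clause, if $\dim\ds(R/xR)=0$ for some such $x$, then the displayed inequality specializes to $\dim\ds(R)\le\alpha(x)\cdot 1-1=\alpha(x)-1$.

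There is no essential obstacle here, as the statement is a specialization of Theorem C; the one point requiring care is the finiteness of $\alpha(x)$, which is exactly where the isolated-singularity hypothesis and the assumption $\dim\ds(R)<\infty$ are used (via $\m$-primality of $\Ann\ds(R)$). I would also record the harmless boundary conventions: if $\m$ contains no nonzerodivisor, i.e. $\m\in\mathrm{Ass}(R)$, the infimum is taken over the empty set and the bound is vacuous; and if $R$ happened to be regular both sides would degenerate to $-\infty$. In the genuinely singular case, $R/xR$ is never regular for a nonzerodivisor $x\in\m$, so $\dim\ds(R/xR)\ge 0$ and every term on the right-hand side is nonnegative, consistent with $\dim\ds(R)\ge 0$.
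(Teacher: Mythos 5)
Your proposal is correct and follows the paper's own route: Liu's theorem gives that $\Ann\ds(R)$ is $\m$-primary under the isolated-singularity and finite-dimension hypotheses, so $\alpha(x)$ is finite, and the bound is then the $n=1$ case of Theorem C applied to the regular sequence consisting of the single nonzerodivisor $x$. (The only quibble is the throwaway remark that both sides ``degenerate to $-\infty$'' when $R$ is regular, which depends on a convention the paper does not adopt, but this does not affect the argument.)
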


%%%
%%% Proof of the main theorem
%%%

\section{Proof of the main theorem}

In this section, we introduce and prove preliminaries about the subcategories which appear in our main observation \ref{main-observation}. Let us start with the conventions and the definitions that we will use.

\begin{convention}
Throughout the rest of this section, let $R$ be a commutative Noetherian ring. Denote by $\mod R$ the category of finitely generated $R$-modules.
Denote by $\db(R)$ the bounded derived category of $\mod R$.
Let $\ds(R)$ stand for the {\em singularity category} of $R$, which is by definition the Verdier quotient of $\db(R)$ by the perfect complexes. 
We assume that all $R$-modules are finitely generated and all subcategories are strictly full.
We may omit subscripts/superscripts unless there is a danger of confusion.
\end{convention}

\begin{definition}
\begin{enumerate}[(1)]
\item
For an element $x\in R$, we denote by $\C(x)$ the subcategory of $\ds(R)$ consisting of objects $X$ such that the morphism $X\xrightarrow{x}X$ in $\ds(R)$ given by the multiplication by $x$ is zero.
Note that $\C(x)$ is closed under finite direct sums, direct summands, and shifts.
\item
Let $\A$ be an additive category.
For a subcategory $\X$ of $\A$, we denote by $\smd_\A\X$ the subcategory of $\A$ consisting of direct summands of objects in $\X$.
\item
Let $\T$ be a triangulated category.
For subcategories $\X,\Y$ of $\T$, we denote by $\X\ast\Y$ the subcategory of $\T$ consisting of objects $E\in\T$ such that there exists an exact triangle $X\to E\to Y\to X[1]$ in $\T$ with $X\in\X$ and $Y\in\Y$.
Using the octahedral axiom, one easily sees that $-\ast-$ is associative:
the equality $(\X\ast\Y)\ast\Z=\X\ast(\Y\ast\Z)$ holds for subcategories $\X,\Y,\Z$ of $\T$.
Hence there is no confusion in writing $\X_1\ast\cdots\ast\X_n$ for subcategories $\X_1,\dots,\X_n$ of $\T$.
\item
For a sequence $\xx=x_1,\dots,x_n$ of elements of $R$, we denote by $\k(\xx)$ the Koszul complex of $\xx$ over $R$.
\end{enumerate}
\end{definition}

\begin{remark}
Let $T\in\db(R)$.
In general, taking derived tensor product $T\lten_R-$ does not preserve an isomorphism in $\ds(R)$, but it does if $T$ is isomorphic to a perfect complex.
To be precise, let $P$ be a perfect complex over $R$.
Let $(X\xleftarrow{s}Z\xrightarrow{f}Y)$ be a morphism in $\ds(R)$, that is, $s$ and $f$ are morphisms in $\db(R)$ such that the mapping cone $\cone(s)$ of $s$ is isomorphic to a perfect complex.
Then $(P\lten_RX\xleftarrow{P\lten_Rs}P\lten_RZ\xrightarrow{P\lten_Rf}P\lten_RY)$ is a morphism in $\ds(R)$, since $\cone(P\lten_Rs)$ is isomorphic to a perfect complex.
One sees that the functor $P\lten_R-:\db(R)\to\db(R)$ induces a functor $\ds(R)\to\ds(R)$.
In particular, the implication
$$
\text{$X\cong Y$ in $\ds(R)$}\implies\text{$\k(\xx)\otimes X\cong\k(\xx)\otimes Y$ in $\ds(R)$}
$$
holds for any objects $X,Y$ of $\ds(R)$ and any sequence $\xx=x_1,\dots,x_n$ in $R$.
\end{remark}

The first part of the following lemma shows us the role that the Koszul complex plays in the rest of the section.

\begin{lemma}\label{4}
\begin{enumerate}[\rm(1)]
\item
Let $x\in R$.
Then $\C(x)=\smd\{\k(x)\otimes X\mid X\in\ds(R)\}$.
\item
Let $x,y\in R$.
Let $A\to B\to C\to A[1]$ be an exact triangle in $\ds(R)$.
If $A\in\C(x)$ and $C\in\C(y)$, then $B\in\C(xy)$.
\end{enumerate}
\end{lemma}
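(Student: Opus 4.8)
For part (1), the plan is to prove both inclusions. For the inclusion $\smd\{\k(x)\otimes X\mid X\in\ds(R)\}\subseteq\C(x)$, I would first check that $\k(x)\otimes X$ lies in $\C(x)$ for every $X$: the Koszul complex $\k(x)$ is the two-term complex $0\to R\xrightarrow{x}R\to 0$, and multiplication by $x$ on $\k(x)$ is null-homotopic (the homotopy being the identity map $R\to R$ in the appropriate degree), hence multiplication by $x$ on $\k(x)\otimes X\cong\k(x)\lten_R X$ is zero in $\db(R)$, a fortiori in $\ds(R)$; since $\C(x)$ is closed under direct summands (as recorded in the Definition), the inclusion follows. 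For the reverse inclusion $\C(x)\subseteq\smd\{\k(x)\otimes X\}$, take $X\in\C(x)$. There is a standard exact triangle $X\xrightarrow{x}X\to\k(x)\otimes X\to X[1]$ in $\db(R)$, hence in $\ds(R)$. Since $X\in\C(x)$, the first map is zero in $\ds(R)$, so this triangle splits: $\k(x)\otimes X\cong X\oplus X[1]$ in $\ds(R)$. In particular $X$ is a direct summand of $\k(x)\otimes X$, which gives $X\in\smd\{\k(x)\otimes X\mid X\in\ds(R)\}$.

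For part (2), I would argue as follows. Since $A\in\C(x)$, multiplication by $x$ on $A$ is zero in $\ds(R)$; since $C\in\C(y)$, multiplication by $y$ on $C$ is zero. Consider multiplication by $xy$ on the given triangle. Using that multiplication maps commute with all morphisms in the triangle (multiplication by a ring element is a natural transformation of the identity functor), I would show the composite $B\xrightarrow{xy}B$ factors through the boundary and connecting maps in a way that forces it to vanish. Concretely: multiplication by $x$ on $B$, when composed with $B\to C$, equals $(B\to C)$ followed by $x\cdot(-)$ on $C$... but more usefully, since $x$ kills $A$, the map $x\colon B\to B$ followed by $A[-1]\to B$ — rather, the map $A\to B$ composed with $x\colon B\to B$ equals $x\colon A\to A$ composed with $A\to B$, which is zero; hence $x\colon B\to B$ factors through $B\to C$, say $x=g\circ(B\to C)$ for some $g\colon C\to B$. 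Similarly, since $y$ kills $C$, the composite $(B\to C)\circ(y\colon B\to B)=(y\colon C\to C)\circ(B\to C)=0$, so $y\colon B\to B$ factors through $A\to B$, say $y=(A\to B)\circ h$ for some $h\colon B\to A$. Then $xy\colon B\to B$ equals $x\cdot y = g\circ(B\to C)\circ(A\to B)\circ h$, and $(B\to C)\circ(A\to B)=0$ because two consecutive maps in an exact triangle compose to zero. Hence $xy=0$ on $B$ in $\ds(R)$, i.e. $B\in\C(xy)$.

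The main obstacle I anticipate is making the factorization argument in part (2) fully rigorous: one must be careful that "multiplication by $x$" is genuinely a morphism in $\ds(R)$ commuting with the triangle maps, and that vanishing on $A$ and $C$ in the \emph{stable} category (not just up to homotopy in $\db(R)$) suffices. This should follow from the fact that the multiplication-by-$r$ natural transformation on $\db(R)$ descends to $\ds(R)$, together with the usual homological-algebra lemma that a morphism in a triangulated category out of the middle term of a triangle which vanishes after composing with one leg factors through the other. Part (1) is essentially the splitting-triangle computation and should be routine once the Koszul triangle $X\xrightarrow{x}X\to\k(x)\otimes X\to X[1]$ is invoked; the only care needed is that this triangle, built in $\db(R)$, remains a triangle in the Verdier quotient $\ds(R)$, which is immediate since the quotient functor is triangulated.
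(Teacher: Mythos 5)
Your proposal is correct and follows essentially the same route as the paper: part (1) uses the null-homotopy of multiplication by $x$ on $\k(x)$ for one inclusion and the splitting of the triangle $Y\xrightarrow{x}Y\to\k(x)\otimes Y\to Y[1]$ for the other, and part (2) is exactly the standard factorization argument that the paper delegates to a citation of \cite[Remark 2.6]{dec}.
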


\begin{proof}
(1) 
The chain map $\k(x)\xrightarrow{x}\k(x)$ given by multiplication by $x$ is null-homotopic by \cite[Proposition 1.6.5(a)]{BH}, which shows the inclusion $(\supseteq)$.
To show the inclusion $(\subseteq)$, pick an object $Y\in\C(x)$.
There is an exact triangle $R\xrightarrow{x}R\to\k(x)\to R[1]$ in $\db(R)$.
Tensoring the complex $Y$, we get an exact triangle
\begin{equation}\label{3}
Y\xrightarrow{x}Y\to\k(x)\otimes Y\to Y[1]
\end{equation}
in $\db(R)$.
Since $(Y\xrightarrow{x}Y)=0$ in $\ds(R)$, the image of \eqref{3} in $\ds(R)$ yields an isomorphism\begin{equation}\label{504}
\k(x)\otimes Y\cong Y\oplus Y[1]
\end{equation} in $\ds(R)$.

(2) The argument given in \cite[Remark 2.6]{dec} shows the assertion.
\end{proof}

Before proving our main observation, we need one more technical lemma.

\begin{lemma}\label{2}
Let $\T$ be a triangulated category.
Let $\X,\Y$ be subcategories of $\T$.
Then there are equalities
$$
\smd((\smd\X)\ast\Y)=\smd(\X\ast\Y)=\smd(\X\ast(\smd\Y)).
$$
\end{lemma}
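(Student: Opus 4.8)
The plan is to prove the two nontrivial equalities $\smd((\smd\X)\ast\Y)=\smd(\X\ast\Y)$ and $\smd(\X\ast(\smd\Y))=\smd(\X\ast\Y)$; by symmetry of the argument it suffices to treat the first one. The inclusion $\smd(\X\ast\Y)\subseteq\smd((\smd\X)\ast\Y)$ is immediate from $\X\subseteq\smd\X$, so the content is in the reverse inclusion $\smd((\smd\X)\ast\Y)\subseteq\smd(\X\ast\Y)$. Since $\smd$ is idempotent and monotone, it is enough to show $(\smd\X)\ast\Y\subseteq\smd(\X\ast\Y)$.

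So let $E\in(\smd\X)\ast\Y$, meaning there is an exact triangle $X'\to E\to Y\to X'[1]$ with $X'\in\smd\X$ and $Y\in\Y$. Write $X'\oplus X''=X$ for some $X\in\X$ and some object $X''$ (which lies in $\smd\X$ but we only need $X\in\X$). The idea is to add $X''$ as a ``trivial'' summand to the triangle: form the direct sum of the given triangle with the split triangle $X''\xrightarrow{1}X''\to 0\to X''[1]$. Using that the direct sum of exact triangles is an exact triangle, this produces an exact triangle
$$
X\longrightarrow E\oplus X''\longrightarrow Y\longrightarrow X[1],
$$
which exhibits $E\oplus X''\in\X\ast\Y$. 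Hence $E$ is a direct summand of an object of $\X\ast\Y$, i.e. $E\in\smd(\X\ast\Y)$, as desired. The mirror-image argument — instead adding the split triangle $0\to Y''\xrightarrow{1}Y''\to 0$ on the right, where $Y'\oplus Y''=Y_0\in\Y$ — gives $\X\ast(\smd\Y)\subseteq\smd(\X\ast\Y)$ and hence the third equality.

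I do not anticipate a serious obstacle here; the only point requiring a moment's care is the assertion that the termwise direct sum of two exact triangles is again an exact triangle in $\T$, which is a standard fact in any triangulated category (it follows from the axioms, e.g. from the fact that the rotation of a split triangle is split and that finite coproducts of triangles are triangles). Everything else is formal manipulation with the definitions of $\smd$ and $\ast$ together with idempotency and monotonicity of $\smd$.
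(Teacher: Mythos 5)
Your proof is correct and follows essentially the same route as the paper: both arguments complete the summand $X'$ of an object of $\X$ by taking the direct sum of the given triangle with a split triangle on the missing complement, producing a triangle witnessing membership in $\X\ast\Y$ up to a direct summand. Your preliminary reduction to showing $(\smd\X)\ast\Y\subseteq\smd(\X\ast\Y)$ via idempotency and monotonicity of $\smd$ is a minor streamlining of the paper's direct handling of an arbitrary summand, but the substance is identical.
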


\begin{proof}
Since $\smd\X$ contains $\X$, we have that $\smd((\smd\X)\ast\Y)$ contains $\smd(\X\ast\Y)$.
To show the opposite inclusion, pick an object $A\in\smd((\smd\X)\ast\Y)$.
Then there exists an object $B\in\T$ such that $A\oplus B\in(\smd\X)\ast\Y$, which gives an exact triangle
\begin{equation}\label{1}
C\to A\oplus B\to Y\to C[1]
\end{equation}
in $\T$ with $C\in\smd\X$ and $Y\in\Y$.
There exists an object $D\in\T$ such that $C\oplus D\in\X$.
Taking the direct sum of \eqref{1} with the exact triangle $D\to D\to0\to D[1]$, we get an exact triangle
$$
C\oplus D\to A\oplus B\oplus D\to Y\to(C\oplus D)[1].
$$
Hence $A\oplus B\oplus D$ is in $\X\ast\Y$, and it follows that $A$ belongs to $\smd(\X\ast\Y)$.
The first equality of the lemma is now obtained, and the second equality is shown similarly.
\end{proof}

Now we are ready to prove our main observation. We note once again that this observation works in the generality of all commutative Noetherian rings.

\begin{proposition}\label{5}
Let $x_1,\dots,x_n\in R$.
Then there is an equality
$$
\C(x_1\cdots x_n)=\smd(\C(x_1)\ast\cdots\ast\C(x_n)).
$$
\end{proposition}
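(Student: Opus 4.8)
The plan is to reduce to the two-variable case $n=2$ and then induct, using the associativity of $\ast$ together with Lemma~\ref{2}, which lets us freely insert or delete $\smd$ around factors of an $\ast$-product. So first suppose $n=2$ and write $x=x_1$, $y=x_2$; the goal is $\C(xy)=\smd(\C(x)\ast\C(y))$. The inclusion $(\supseteq)$ is essentially Lemma~\ref{4}(2): if $E\in\C(x)\ast\C(y)$, there is a triangle $A\to E\to B\to A[1]$ with $A\in\C(x)$ and $B\in\C(y)$, so $E\in\C(xy)$ by that lemma; and $\C(xy)$ is closed under direct summands (Definition, part (1)), so $\smd(\C(x)\ast\C(y))\subseteq\C(xy)$.

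For the reverse inclusion $(\subseteq)$, take $Z\in\C(xy)$. The idea is to realize $Z$ as a summand of an object that is visibly built from $\C(x)$ and $\C(y)$ by a single triangle. Using Lemma~\ref{4}(1) applied to the element $y$, I expect to produce the triangle
\begin{equation*}
Z\xrightarrow{\,y\,}Z\to\k(y)\otimes Z\to Z[1]
\end{equation*}
in $\db(R)$ (the image in $\ds(R)$ of $R\xrightarrow{y}R\to\k(y)\to R[1]$ tensored with $Z$). The key observations are then: first, $\k(y)\otimes Z\in\C(y)$ by Lemma~\ref{4}(1), since it is of the form $\k(y)\otimes(-)$; and second, the cokernel-type term behaves well because $Z\in\C(xy)$. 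To exploit the latter, note that multiplication by $xy$ is zero on $Z$ in $\ds(R)$, so multiplication by $x$ on $Z$ factors through the connecting map, i.e.\ composing $Z\xrightarrow{x}Z$ with $Z\xrightarrow{y}Z$ gives $0$; rotating the triangle above, the map $Z\xrightarrow{x}Z$ lifts along $\k(y)\otimes Z\to Z$ (since $x\cdot y=0$ on $Z$ forces $x$ to kill the image of the connecting map $Z[-1]\to Z$ appropriately). I would then build a second triangle whose third term lies in $\C(x)$ — concretely, apply Lemma~\ref{4}(1) for the element $x$ to $Z$, getting $Z\xrightarrow{x}Z\to\k(x)\otimes Z\to Z[1]$ with $\k(x)\otimes Z\in\C(x)$ — and splice the two triangles via the octahedral axiom to obtain a triangle $(\text{object in }\C(x))\to W\to(\text{object in }\C(y))\to$, with $Z$ a direct summand of $W$. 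That exhibits $Z\in\smd(\C(x)\ast\C(y))$.

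Actually I suspect the cleanest route avoids octahedra: since $xy$ annihilates $Z$ in $\ds(R)$, one has in $\ds(R)$ a factorization of $\mathrm{id}_{?}$ or at least enough to see that $Z$ is a summand of $\cone(Z\xrightarrow{x}\k(y)\otimes Z)$ or of $\k(x)\otimes Z$ glued to $\k(y)\otimes Z$; the point is that the Koszul complex $\k(xy)$ fits in a triangle relating $\k(x)$, $\k(y)$ and $\k(xy)$ — there is a standard triangle $\k(y)\to\k(xy)\to\k(x)\to\k(y)[1]$ (or $\k(x)\to\k(xy)\to\k(y)$, depending on conventions), coming from the factorization $R\xrightarrow{xy}R$ as $R\xrightarrow{y}R\xrightarrow{x}R$ and the octahedral axiom applied to the cones. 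Tensoring that triangle with $Z\in\C(xy)$ and using Lemma~\ref{4}(1) (which says $\k(xy)\otimes Z\cong Z\oplus Z[1]$ since $Z\in\C(xy)$, and that $\k(x)\otimes Z\in\C(x)$, $\k(y)\otimes Z\in\C(y)$) gives a triangle with outer terms in $\C(x)$ and $\C(y)$ and middle term $\cong Z\oplus Z[1]$; hence $Z\in\smd(\C(x)\ast\C(y))$. This is the two-variable case.

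For general $n$, I would induct: $\C(x_1\cdots x_n)=\C((x_1\cdots x_{n-1})x_n)=\smd\bigl(\C(x_1\cdots x_{n-1})\ast\C(x_n)\bigr)$ by the case $n=2$, then substitute the inductive hypothesis $\C(x_1\cdots x_{n-1})=\smd(\C(x_1)\ast\cdots\ast\C(x_{n-1}))$ and clean up with Lemma~\ref{2} and associativity of $\ast$ to absorb the inner $\smd$ and conclude $\C(x_1\cdots x_n)=\smd(\C(x_1)\ast\cdots\ast\C(x_n))$. The main obstacle I anticipate is the bookkeeping in the $n=2$ step: getting the Koszul triangle $\k(y)\to\k(xy)\to\k(x)\to$ with the correct orientation and verifying that tensoring by $Z$ keeps it a triangle in $\ds(R)$ (this is exactly the content of the Remark before Lemma~\ref{4}, since $\k(x),\k(y),\k(xy)$ are perfect), and then matching $\k(xy)\otimes Z\cong Z\oplus Z[1]$ carefully enough to pull $Z$ itself — not just $Z\oplus Z[1]$ — out as a summand of something in $\C(x)\ast\C(y)$ (here one uses that $\C(x)\ast\C(y)$ is closed under shifts and finite direct sums, so $Z\oplus Z[1]\in\smd(\C(x)\ast\C(y))$ already forces $Z\in\smd(\C(x)\ast\C(y))$).
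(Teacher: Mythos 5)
Your proposal is correct and follows essentially the same route as the paper: the $(\supseteq)$ inclusion via Lemma~\ref{4}(2) plus closure of $\C(xy)$ under summands, the $(\subseteq)$ inclusion via an octahedron on the factorization of multiplication by $xy$ together with the identification $\k(xy)\otimes Z\cong Z\oplus Z[1]$ for $Z\in\C(xy)$, and the reduction from $n$ to $2$ by induction, associativity of $\ast$, and Lemma~\ref{2}. The only (immaterial) difference is that your ``cleanest route'' applies the octahedral axiom upstairs to $R\xrightarrow{y}R\xrightarrow{x}R$ to get the triangle $\k(y)\to\k(xy)\to\k(x)\to\k(y)[1]$ and then tensors with $Z$, whereas the paper applies it directly to $A\xrightarrow{x}A\xrightarrow{y}A$; these yield the same triangle (and your version is exactly what the paper does later in Proposition~\ref{multiplication-proposition}), so the vaguer lifting argument in your middle paragraph can simply be discarded.
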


\begin{proof}
In view of Lemma \ref{2}, it suffices to show that $\C(xy)=\smd(\C(x)\ast\C(y))$ for $x,y\in R$.
Indeed, if we have done it and if $\C(x_1\cdots x_{n-1})=\smd(\C(x_1)\ast\cdots\ast\C(x_{n-1}))$, then we will have
\begin{multline*}
\C(x_1\cdots x_n)
=\C((x_1\cdots x_{n-1})x_n)
=\smd(\C(x_1\cdots x_{n-1})\ast\C(x_n))\\
=\smd(\smd(\C(x_1)\ast\cdots\ast\C(x_{n-1}))\ast\C(x_n))
=\smd(\C(x_1)\ast\cdots\ast\C(x_{n-1})\ast\C(x_n)).
\end{multline*}

Let us show the inclusion $\C(xy)\subseteq\smd(\C(x)\ast\C(y))$.
Pick an object $A\in\C(xy)$.
Applying the octahedral axiom, we get a commutative diagram
$$
\xymatrix{
A\ar[r]^x\ar@{=}[d]& A\ar[r]\ar[d]^y& \k(x)\otimes A\ar[r]\ar[d]& A[1]\ar@{=}[d]\\
A\ar[r]^{xy}_0\ar[d]& A\ar[r]\ar@{=}[d]& B\ar[r]\ar[d]& A[1]\ar[d]\\
A\ar[r]^y\ar[d]& A\ar[r]\ar[d]& \k(y)\otimes A\ar[r]\ar@{=}[d]& A[1]\ar[d]\\
\k(x)\otimes A\ar[r]& B\ar[r]& \k(y)\otimes A\ar[r]& (\k(x)\otimes A)[1]
}
$$
in $\ds(R)$ whose rows are exact triangles.
The second row implies $B\cong A\oplus A[1]$, while the bottom row shows $B\in\C(x)\ast\C(y)$ by Lemma \ref{4}(1).
Therefore, $A$ belongs to $\smd(\C(x)\ast\C(y))$.

Let us show the inclusion $\C(xy)\supseteq\smd(\C(x)\ast\C(y))$.
Pick $A\in\smd(\C(x)\ast\C(y))$.
Then $A\oplus B\in\C(x)\ast\C(y)$ for some $B\in\ds(R)$, and there exists an exact triangle $C\to A\oplus B\to D\to C[1]$ in $\ds(R)$ such that $C\in\C(x)$ and $D\in\C(y)$.
Lemma \ref{4}(2) says that $A\oplus B$ is in $\C(xy)$, and so is its direct summand $A$.
\end{proof}

We are interested in the situation where the left hand side of the equality in the preceding proposition is the entire singularity category. That is, we would like to consider the case where for any object in the bounded derived category, multiplication by $x_1 \cdots x_n$ factors through a perfect complex. Let us make this more precise and introduce notation with the following definition.

\begin{definition}
We introduce two kinds of annihilators as follows.
\begin{enumerate}[(1)]
\item
The {\em annihilator} of an object $X\in\ds(R)$ is defined by:
\begin{align*}
\Ann_{\ds(R)}X
&=\{a\in R\mid(X\xrightarrow{a}X)=0\text{ in }\ds(R)\}\\
&=\{a\in R\mid X\in\C(a)\}=\Ann_R(\End_{\ds(R)}(X)).
\end{align*}
\item
The {\em annihilator} $\Ann\ds(R)$ of the category $\ds(R)$ is defined by:
\begin{align*}
\Ann\ds(R)&=\{a\in R\mid(X\xrightarrow{a}X)=0\text{ for all }X\in\ds(R)\}\\
&=\{a\in R\mid\C(a)=\ds(R)\}
=\bigcap_{X\in\ds(R)}\Ann_{\ds(R)}X.
\end{align*}
\end{enumerate}
Note that both of the annihilators $\Ann_{\ds(R)}X$ and $\Ann\ds(R)$ are ideals of $R$.
\end{definition}

Now we recall the definition of the dimension of a triangulated category.

\begin{definition}
Let $\T$ be a triangulated category.
\begin{enumerate}[(1)]
\item
Let $\X$ be a subcategory of $\T$.
We denote by $\langle \X\rangle$ the additive closure $$\add\{X[i]\mid X\in\X,i\in\ZZ\}$$ in $\T$, that is, the smallest subcategory of $\T$ containing $\X$ and closed under finite direct sums, direct summands and shifts. 
We set $\langle\X\rangle_0^{\T}=0$ and define $\langle \X \rangle_n^\T = \langle\langle \X \rangle_{n-1}^\T \ast\langle\X\rangle\rangle$ by induction for any $n \geq 1$. Note that $\langle\X\rangle_1=\langle\X\rangle$.
When $\X$ consists of a single object $T$, we simply write $\langle T\rangle_n$ instead of $\langle\X\rangle_n$.
\item
The {\em (Rouquier) dimension} of $\T$ is defined by
$$
\dim\T=\inf\{n\ge0\mid\text{$\langle G\rangle_{n+1}=\T$ for some $G\in\T$}\}.
$$
\end{enumerate}
\end{definition}

We are now ready to give the main application of our main observation.

\begin{theorem}\label{7}
Let $x_1,\dots,x_n$ be elements of $R$ such that the product $x_1\cdots x_n$ belongs to $\Ann\ds(R)$.
Then
$$
\ds(R)=\smd(\C(x_1)\ast\cdots\ast\C(x_n)).
$$
If no $x_i$ is a unit or a zerodivisor, then there is an inequality
$$
\dim\ds(R)\le\sum_{i=1}^n\dim\ds(R/x_iR)+n-1.
$$
\end{theorem}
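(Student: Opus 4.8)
The plan is to establish the two assertions separately, the first being essentially immediate and the second requiring the bulk of the work. For the decomposition, note that the hypothesis $x_1\cdots x_n \in \Ann \ds(R)$ means precisely that $\C(x_1\cdots x_n) = \ds(R)$ by the definition of the category annihilator. Then Proposition \ref{5} applied to $x_1, \dots, x_n$ immediately gives $\ds(R) = \C(x_1\cdots x_n) = \smd(\C(x_1)\ast\cdots\ast\C(x_n))$, so there is nothing more to do here.

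For the dimension inequality, the strategy is to bound $\dim\ds(R)$ by controlling each factor $\C(x_i)$ individually and then assembling via the behaviour of $\dim$ under the $\ast$ operation. The key input is Lemma \ref{4}(1), which tells us $\C(x_i) = \smd\{\k(x_i)\otimes X \mid X \in \ds(R)\}$; since no $x_i$ is a unit or a zerodivisor, $R/x_iR$ is a nonzero ring and $x_i$ is $R$-regular, so the Koszul complex $\k(x_i)$ is a resolution of $R/x_iR$ and is a perfect complex over $R$. I would next observe that the functor $\k(x_i)\otimes_R - : \ds(R) \to \ds(R)$ factors through the singularity category of $R/x_iR$: more precisely, $\k(x_i)\otimes_R X$ can be regarded as (the image under the natural functor $\ds(R/x_iR)\to\ds(R)$ of) an object built from $R/x_iR\lten_R X$, so that if $G_i$ is a generator realizing $\dim\ds(R/x_iR) = d_i$, i.e. $\ds(R/x_iR) = \langle G_i\rangle_{d_i+1}$, then every object of $\C(x_i)$ lies in $\langle \k(x_i)\otimes G_i'\rangle_{d_i+1}$ for a suitable object $G_i'$ over $R$. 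In other words, $\C(x_i) \subseteq \langle H_i \rangle_{d_i+1}$ in $\ds(R)$ for some single object $H_i \in \ds(R)$.

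Granting that, the final step is bookkeeping with the thick-subcategory filtration. Setting $H = H_1 \oplus \cdots \oplus H_n$, we have $\C(x_i)\subseteq \langle H\rangle_{d_i+1}$ for each $i$, hence
$$
\ds(R) = \smd(\C(x_1)\ast\cdots\ast\C(x_n)) \subseteq \smd(\langle H\rangle_{d_1+1}\ast\cdots\ast\langle H\rangle_{d_n+1}) \subseteq \langle H\rangle_{(d_1+1)+\cdots+(d_n+1)},
$$
using the standard inequality $\langle\X\rangle_a \ast \langle\X\rangle_b \subseteq \langle\X\rangle_{a+b}$ together with the fact that $\langle\X\rangle_m$ is already closed under direct summands. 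Since $\sum_{i=1}^n (d_i+1) = \sum_{i=1}^n \dim\ds(R/x_iR) + n$, this exhibits $\ds(R) = \langle H\rangle_{k+1}$ with $k = \sum \dim\ds(R/x_iR) + n - 1$, giving the claimed bound on $\dim\ds(R)$.

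The main obstacle I anticipate is the containment $\C(x_i)\subseteq\langle H_i\rangle_{d_i+1}$, specifically making rigorous the claim that an object of the form $\k(x_i)\otimes_R X$ for $X\in\ds(R)$ can be ``resolved'' over $R/x_iR$ into at most $d_i+1$ Koszul layers. The point is that $\k(x_i)\otimes_R X \simeq (R/x_iR)\lten_R X$ in $\db(R)$, and the latter is (the restriction of scalars of) a complex in $\db(R/x_iR)$ whose image in $\ds(R/x_iR)$ lies in $\langle G_i\rangle_{d_i+1}$; one must check that the forgetful functor $\ds(R/x_iR)\to\ds(R)$ (induced by restriction of scalars along $R\to R/x_iR$, which is well-defined since $R/x_iR$ has finite projective dimension over $R$) carries $\langle G_i\rangle_{d_i+1}$ into $\langle G_i\rangle_{d_i+1}$ computed in $\ds(R)$, because this functor is exact and commutes with $\ast$ and with direct summands. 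Once this compatibility of the two ambient triangulated categories is in place, the rest is the routine filtration arithmetic sketched above.
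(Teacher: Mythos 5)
Your proposal is correct and follows essentially the same route as the paper: the first assertion is exactly Proposition \ref{5} combined with $\C(x_1\cdots x_n)=\ds(R)$, and the dimension bound is obtained just as in the paper by identifying $\C(x_i)=\smd\{R/x_iR\lten_RX\mid X\in\ds(R)\}$ via Lemma \ref{4}(1), pushing a generator of $\ds(R/x_iR)$ forward along the restriction-of-scalars functor $\ds(R/x_iR)\to\ds(R)$ (well defined since $x_i$ is a nonzerodivisor, so perfect $R/x_iR$-complexes restrict to perfect $R$-complexes), and then performing the standard $\ast$-filtration count. The one point you flag as a potential obstacle --- the compatibility of $\langle-\rangle_{d_i+1}$ with the exact functor $\ds(R/x_iR)\to\ds(R)$ --- is handled in the paper exactly as you describe, so there is no gap.
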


\begin{proof}
It immediately follows from Proposition \ref{5} that $\ds(R)=\smd(\C(x_1)\ast\cdots\ast\C(x_n))$.
Let us show the dimension inequality.
Since each $x_i$ is a non-unit, $R/x_iR$ is not a zero ring.
We may assume $\dim\ds(R/x_iR)=:d_i<\infty$ for each $i$.
There exists an object $G_i\in\ds(R/x_iR)$ such that $\ds(R/x_iR)=\langle G_i\rangle_{d_i+1}^{\ds(R/x_iR)}$.
Since $x_i$ is a non-zerodivisor, we have
$$
\C(x_i)=\smd\{\k(x_i)\otimes X\mid X\in\ds(R)\}=\smd\{R/x_iR\lten_RX\mid X\in\ds(R)\}
$$
by Lemma \ref{4}(1), and
\begin{equation}\label{6}
R/x_iR\lten_RX\in\ds(R/x_iR)=\langle G_i\rangle_{d_i+1}^{\ds(R/x_iR)}.
\end{equation}
As a perfect $R/x_iR$-complex is quasi-isomorphic to a perfect $R$-complex, the natural surjection $R\to R/x_iR$ induces an exact functor $\ds(R/x_iR)\to\ds(R)$.
Applying this functor to \eqref{6}, we observe that there is an inclusion $\C(x_i)\subseteq\langle G_i\rangle_{d_i+1}^{\ds(R)}$.
It follows that
\begin{align*}
\ds(R)=\smd(\C(x_1)\ast\cdots\ast\C(x_n))&\subseteq\smd(\langle G_1\rangle_{d_1+1}\ast\cdots\ast\langle G_n\rangle_{d_n+1})\\
&\subseteq\langle G_1\oplus\cdots\oplus G_n\rangle_{(d_1+1)+\cdots+(d_n+1)}\subseteq\ds(R).
\end{align*}
Hence the equality $\ds(R)=\langle G_1\oplus\cdots\oplus G_n\rangle_{(d_1+\cdots+d_n)+n}$ holds, which yields the inequality $\dim\ds(R)\le(d_1+\cdots+d_n)+n-1$.
\end{proof}

We should compare the above theorem with \cite[Theorem 1.1 and Corollary 2.12]{dec}, which have similar flavors.

\section{The case of a sequence of elements}

In the previous section, we considered the full subcategory of objects in the singularity category of a commutative Noetherian ring that are annihilated by a given ring element. In this section, we are going to further investigate such subcategories by considering their intersections. That is, we will consider the full subcategory of objects that are annihilated by a given sequence of ring elements.

\begin{definition}
\begin{enumerate}[(1)]
\item
For a sequence $\xx=x_1,\dots,x_n$ of elements of $R$, set $\C(\xx)=\bigcap_{i=1}^n\C(x_i)$.
Namely, $\C(\xx)$ is the subcategory of $\ds(R)$ consisting of all objects $X$ with $(X\xrightarrow{x_i}X)=0$ in $\ds(R)$ for all $i$.
\item
For an ideal $I$ of $R$, we set $\C(I)=\bigcap_{a\in I}\C(a)$.
This is the subcategory of $\ds(R)$ consisting of those objects $X$ which satisfy $(X\xrightarrow{a}X)=0$ in $\ds(R)$ for all $a\in I$.
Note that for any ideals $I,J$ of $R$, if $I\subseteq J$, then $\C(I)\supseteq\C(J)$.
\end{enumerate}
\end{definition}

The second assertion of the following proposition is a generalization of Lemma \ref{4}(1).

\begin{proposition}\label{11}
\begin{enumerate}[\rm(1)]
\item
Let $I$ be an ideal of $R$.
Let $\xx=x_1,\dots,x_n$ be a system of generators of $I$.
Then $\C(I)=\C(\xx)$.
In particular, the implication
$$
(\aa)=(\bb)\implies\C(\aa)=\C(\bb)
$$
holds for sequences of elements $\aa=a_1,\dots,a_r$ and $\bb=b_1,\dots,b_s$ in $R$.
\item
Let $\xx=x_1,\dots,x_n$ be a sequence of elements of $R$.
Then every object $C\in\C(\xx)$ satisfies
$$
\k(\xx)\otimes C\cong\bigoplus_{i=0}^n(C[i])^{\oplus\binom{n}{i}}
$$
in $\ds(R)$.
In particular, it holds that $\C(\xx)=\smd\{\k(\xx)\otimes X\mid X\in\ds(R)\}$.
\end{enumerate}
\end{proposition}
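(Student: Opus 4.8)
The plan is to reduce everything to the fact, built into the definition of $\Ann_{\ds(R)}X$, that $X\in\C(a)$ holds exactly when $a\in\Ann_{\ds(R)}X$, together with the fact that $\Ann_{\ds(R)}X$ is an ideal of $R$. Granting this, for any $X\in\ds(R)$ there is a chain of equivalences
$$
X\in\C(\xx)\iff x_i\in\Ann_{\ds(R)}X\text{ for all }i\iff I\subseteq\Ann_{\ds(R)}X\iff X\in\C(I),
$$
where the middle step uses that $\Ann_{\ds(R)}X$ is an ideal and that $\xx$ generates $I$; this yields $\C(I)=\C(\xx)$. The displayed implication is then immediate: if $I:=(\aa)=(\bb)$, then $\aa$ and $\bb$ are both systems of generators of $I$, so $\C(\aa)=\C(I)=\C(\bb)$. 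I expect no obstacle here.

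\textbf{Part (2), the direct-sum formula.} I would prove $\k(\xx)\otimes C\cong\bigoplus_{i=0}^n(C[i])^{\oplus\binom{n}{i}}$ in $\ds(R)$ for $C\in\C(\xx)$ by induction on $n$, the case $n=1$ being exactly the isomorphism \eqref{504} produced in the proof of Lemma \ref{4}(1). For the inductive step, set $\xx'=x_1,\dots,x_{n-1}$, so that $\k(\xx)=\k(\xx')\otimes\k(x_n)$. If $C\in\C(\xx)$ then $C\in\C(\xx')$, so the inductive hypothesis gives $\k(\xx')\otimes C\cong\bigoplus_{i=0}^{n-1}(C[i])^{\oplus\binom{n-1}{i}}$ in $\ds(R)$. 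Since $\k(x_n)$ is perfect, tensoring with it carries isomorphisms in $\ds(R)$ to isomorphisms in $\ds(R)$ (the Remark preceding Lemma \ref{4}) and commutes with finite direct sums, so
$$
\k(\xx)\otimes C\cong\bigoplus_{i=0}^{n-1}\bigl(\k(x_n)\otimes C[i]\bigr)^{\oplus\binom{n-1}{i}}\quad\text{in }\ds(R).
$$
Each $C[i]$ still lies in $\C(x_n)$ by closure under shifts, so \eqref{504} applied to $C[i]$ gives $\k(x_n)\otimes C[i]\cong C[i]\oplus C[i+1]$. Substituting, reindexing the shifted block, and invoking Pascal's rule $\binom{n-1}{i}+\binom{n-1}{i-1}=\binom{n}{i}$ then yields $\k(\xx)\otimes C\cong\bigoplus_{i=0}^n(C[i])^{\oplus\binom{n}{i}}$, completing the induction. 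The only point needing care is that these manipulations take place in $\ds(R)$ and are legitimate precisely because the Koszul complexes $\k(x_n)$ --- and $\k(x_i)$ in the base case --- are perfect; the rest is bookkeeping.

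\textbf{Part (2), the ``in particular'' clause.} For the inclusion $\C(\xx)\subseteq\smd\{\k(\xx)\otimes X\mid X\in\ds(R)\}$: if $C\in\C(\xx)$, the formula just proved exhibits $C$ --- the summand indexed by $i=0$, since $\binom{n}{0}=1$ --- as a direct summand of $\k(\xx)\otimes C$. For the reverse inclusion, note that $\C(\xx)=\bigcap_{i=1}^n\C(x_i)$ is closed under direct summands, so it suffices to show $\k(\xx)\otimes X\in\C(x_i)$ for every $X\in\ds(R)$ and every $i$. Writing $\k(\xx)=\k(x_1)\otimes\cdots\otimes\k(x_n)$ and recalling, as in the proof of Lemma \ref{4}(1), that multiplication by $x_i$ on the factor $\k(x_i)$ is null-homotopic, one tensors such a homotopy with the remaining Koszul factors and with $X$ to see that multiplication by $x_i$ on $\k(\xx)\otimes X$ is null-homotopic, hence zero in $\db(R)$ and a fortiori in $\ds(R)$. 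Thus $\k(\xx)\otimes X\in\C(x_i)$ for every $i$, i.e.\ $\k(\xx)\otimes X\in\C(\xx)$, which gives the reverse inclusion. Overall this proposition should pose no real difficulty: it is a routine induction resting on Lemma \ref{4}(1) and the perfectness of Koszul complexes.
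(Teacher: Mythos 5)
Your proposal is correct and follows essentially the same route as the paper: part (1) via the fact that $\Ann_{\ds(R)}X$ is an ideal, and part (2) by induction on $n$, tensoring with $\k(x_n)$, applying the splitting \eqref{504} and Pascal's rule, with the ``in particular'' clause deduced from the $i=0$ summand and the null-homotopy of multiplication by $x_i$ on $\k(x_i)$ (which is the content of Lemma \ref{4}(1), cited directly by the paper). No gaps.
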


\begin{proof}
(1) It is clear that $\C(I)$ is contained in $\C(\xx)$.
To show the opposite inclusion, let $X\in\C(\xx)$.
Take any $a\in I$.
Then $a=\sum_{i=1}^n{a_ix_i}$ for some $a_1,\dots,a_n\in R$.
Note that $x_1,\dots,x_n$ are in $\ann_{\ds(R)}X$.
Since $\ann_{\ds(R)}X$ is an ideal of $R$, the element $a$ is also in $\ann_{\ds(R)}X$.
Hence $X$ belongs to $\C(I)$.

(2) The last assertion follows from the first assertion and the fact by Lemma \ref{4}(1) that
$$
\k(\xx)\otimes X\cong\k(x_i)\otimes(\k(x_1,\dots,x_{i-1},x_{i+1},\dots,x_n)\otimes X)\in\C(x_i)
$$
for each $1\le i\le n$.
To prove the first assertion, we use induction on $n$.
The case $n=0$ is obvious.
Let $n>0$.
The induction hypothesis gives rise to an isomorphism $\k(x_1,\dots,x_{n-1})\otimes C\cong\bigoplus_{i=0}^{n-1}C[i]^{\oplus\binom{n-1}{i}}$ in $\ds(R)$.
Tensoring with $\k(x_n)$, we get isomorphisms
\begin{align*}
\k(\xx)\otimes C
&\cong\k(x_n)\otimes(\k(x_1,\dots,x_{n-1})\otimes C)
\cong\k(x_n)\otimes(\textstyle\bigoplus_{i=0}^{n-1}C[i]^{\oplus\binom{n-1}{i}})\\
&\cong\textstyle\bigoplus_{i=0}^{n-1}(\k(x_n)\otimes C)[i]^{\oplus\binom{n-1}{i}}
\cong\textstyle\bigoplus_{i=0}^{n-1}(C\oplus C[1])[i]^{\oplus\binom{n-1}{i}}\\
&\cong\textstyle\bigoplus_{i=0}^{n-1}(C[i]\oplus C[i+1])^{\oplus\binom{n-1}{i}}
\cong\textstyle\bigoplus_{i=0}^{n-1}(C[i]^{\oplus\binom{n-1}{i}}\oplus C[i+1]^{\oplus\binom{n-1}{i}})\\
&\cong C\oplus(\textstyle\bigoplus_{i=1}^{n-1}C[i]^{\oplus\binom{n-1}{i}})\oplus(\textstyle\bigoplus_{i=0}^{n-2}C[i+1]^{\binom{n-1}{i}})\oplus C[n]\\
&\cong C\oplus\textstyle\bigoplus_{i=1}^{n-1}C[i]^{\oplus\binom{n-1}{i}+\binom{n-1}{i-1}}\oplus C[n]
\cong\textstyle\bigoplus_{i=0}^n(C[i])^{\oplus\binom{n}{i}},
\end{align*}
where the fourth isomorphism follows from \eqref{504}, and the last isomorphism holds since $\binom{n-1}{i}+\binom{n-1}{i-1}=\binom{n}{i}$.
\end{proof}

The following proposition is a weaker version of Proposition \ref{5} for this setting.

\begin{proposition}\label{multiplication-proposition}
For any elements $x_1,\dots,x_n,y,z\in R$ with $n\ge0$ one has:
$$
\C(x_1,\dots,x_n,yz)\subseteq\smd(\C(x_1,\dots,x_n,y)\ast\C(x_1,\dots,x_n,z)).
$$
\end{proposition}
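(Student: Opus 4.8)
The plan is to adapt the octahedral argument from the proof of Proposition \ref{5}, adding the single new observation that tensoring with a Koszul complex preserves each of the subcategories $\C(x_i)$.

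First I would pick an object $A\in\C(x_1,\dots,x_n,yz)$. In particular $A\in\C(yz)$, so $(A\xrightarrow{yz}A)=0$ in $\ds(R)$, and the octahedral axiom applied to the composite $A\xrightarrow{y}A\xrightarrow{z}A$ (exactly as in the proof of Proposition \ref{5}, with the roles of $x,y$ there now played by $y,z$) produces an object $B\in\ds(R)$ together with an isomorphism $B\cong A\oplus A[1]$ and an exact triangle
$$
\k(y)\otimes A\to B\to\k(z)\otimes A\to(\k(y)\otimes A)[1]
$$
in $\ds(R)$.

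Next I would show that $\k(y)\otimes A\in\C(x_1,\dots,x_n,y)$ and, symmetrically, $\k(z)\otimes A\in\C(x_1,\dots,x_n,z)$. By Lemma \ref{4}(1) we already have $\k(y)\otimes A\in\C(y)$, so it remains to check $\k(y)\otimes A\in\C(x_i)$ for each $i$. Since $\k(y)$ is a perfect complex, the Remark preceding Lemma \ref{4} guarantees that $\k(y)\otimes-$ induces a functor on $\ds(R)$; as multiplication by $x_i$ on $\k(y)\otimes A$ is $\k(y)\otimes(A\xrightarrow{x_i}A)$ and $(A\xrightarrow{x_i}A)=0$ in $\ds(R)$, additivity gives $(\k(y)\otimes A\xrightarrow{x_i}\k(y)\otimes A)=0$ in $\ds(R)$, i.e.\ $\k(y)\otimes A\in\C(x_i)$. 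Intersecting over $i$ yields $\k(y)\otimes A\in\C(x_1,\dots,x_n,y)$, and the same argument with $y$ replaced by $z$ gives $\k(z)\otimes A\in\C(x_1,\dots,x_n,z)$.

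Finally, the exact triangle above shows $B\in\C(x_1,\dots,x_n,y)\ast\C(x_1,\dots,x_n,z)$, and since $A$ is a direct summand of $B\cong A\oplus A[1]$, it follows that $A\in\smd(\C(x_1,\dots,x_n,y)\ast\C(x_1,\dots,x_n,z))$, as desired. The only step beyond the proof of Proposition \ref{5} is the stability of $\C(x_i)$ under tensoring with $\k(y)$, and I expect this to be the main (if minor) point to get right; if one prefers to avoid the functoriality language, it can instead be deduced from the description $\C(x_i)=\smd\{\k(x_i)\otimes X\mid X\in\ds(R)\}$ of Lemma \ref{4}(1) together with the associativity and commutativity of $\otimes$, since $A$ being a summand of some $\k(x_i)\otimes X$ forces $\k(y)\otimes A$ to be a summand of $\k(x_i)\otimes(\k(y)\otimes X)$.
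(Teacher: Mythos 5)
Your proof is correct, and it takes a route that is recognizably the same in spirit as the paper's but different in execution. Both arguments hinge on the octahedral axiom applied to the factorization of multiplication by $yz$ as a composite of multiplication by $y$ and by $z$; the difference is where the octahedron is run and how membership in $\C(x_1,\dots,x_n,y)$ is certified. You run it on $A\xrightarrow{y}A\xrightarrow{z}A$ in $\ds(R)$, exactly as in the proof of Proposition \ref{5}, obtaining the triangle $\k(y)\otimes A\to B\to\k(z)\otimes A\to(\k(y)\otimes A)[1]$ with $B\cong A\oplus A[1]$, and then you supply the one genuinely new ingredient: stability of $\C(x_i)$ under $\k(y)\lten_R-$. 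Your justification of that point is sound on both counts --- multiplication by $x_i$ on $\k(y)\otimes A$ is the image under the induced (additive) functor $\k(y)\lten_R-$ on $\ds(R)$ of the zero morphism $A\xrightarrow{x_i}A$, and alternatively it follows from Lemma \ref{4}(1) together with associativity of the tensor product. The paper instead runs the octahedron on $R\xrightarrow{y}R\xrightarrow{z}R$ in $\db(R)$ to produce the triangle of Koszul complexes $\k(y)\to\k(yz)\to\k(z)\to\k(y)[1]$, tensors it with $\k(\xx)\otimes X$, and invokes Proposition \ref{11}(2) both to see that $X$ is a direct summand of the middle term $\k(\xx,yz)\otimes X$ and to place the outer terms in $\C(\xx,y)$ and $\C(\xx,z)$. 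Your version is marginally lighter: it bypasses the full Koszul decomposition of Proposition \ref{11}(2) and keeps the middle term as the small object $A\oplus A[1]$ rather than a direct sum of $2^{n+1}$ shifted copies of $X$, at the modest cost of the extra stability check. Either way the inclusion is established in full.
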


\begin{proof}
Put $\xx=x_1,\dots,x_n$.
Let $X\in\C(\xx,yz)$.
Applying the octahedral axiom, we get a commutative diagram
$$
\xymatrix{
R\ar[r]^y\ar@{=}[d]& R\ar[r]\ar[d]^z& \k(y)\ar[r]\ar[d]& R[1]\ar@{=}[d]\\
R\ar[r]^{yz}\ar[d]& R\ar[r]\ar@{=}[d]& \k(yz)\ar[r]\ar[d]& R[1]\ar[d]\\
R\ar[r]^z\ar[d]& R\ar[r]\ar[d]& \k(z)\ar[r]\ar@{=}[d]& R[1]\ar[d]\\
\k(y)\ar[r]& \k(yz)\ar[r]& \k(z)\ar[r]& \k(y)[1]
}$$
in $\db(R)$ whose rows are exact triangles.
Tensoring $\k(\xx)\otimes X$ with the bottom row induces an exact triangle 
\begin{equation}\label{341}
\k(\xx,y)\otimes X\to\k(\xx,yz)\otimes X\to\k(\xx,z)\otimes X\to(\k(\xx,y)\otimes X)[1].
\end{equation}
It follows from Proposition \ref{11}(2) that $X$ is a direct summand in $\ds(R)$ of $\k(\xx,yz)\otimes X$, and the latter object belongs to $\C(\xx,y)\ast\C(\xx,z)$ by \eqref{341}.
Therefore, $X$ is in $\smd(\C(\xx,y)\ast\C(\xx,z))$.
\end{proof}

The following proposition is an immediate consequence of Proposition \ref{multiplication-proposition}.

\begin{lemma}\label{product-of-powers}
    Let $x_1, \dots, x_n \in R$ and assume that $m_1,\dots, m_n$ are nonnegative integers. Then, we have 
    \begin{align*}
        \C(x_1^{m_1}, \dots, x_n^{m_n}) \subseteq \langle \C(x_1, \dots, x_n) \rangle_m 
    \end{align*}
    where $m = m_1 \cdots  m_n$.
\end{lemma}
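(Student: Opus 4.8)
The plan is to peel off the exponents one variable at a time with the help of Proposition~\ref{multiplication-proposition}, which lands $\C(x_1^{m_1},\dots,x_n^{m_n})$ inside the direct-summand closure of an $m$-fold iterated $\ast$-product of copies of $\C(x_1,\dots,x_n)$, and then to observe that such a closure sits inside $\langle\C(x_1,\dots,x_n)\rangle_m$. Write $\Z^{\ast r}$ for the $r$-fold $\ast$-product of a subcategory $\Z$ of a triangulated category with itself (well defined by the associativity of $\ast$ recorded above, with the convention $\Z^{\ast0}=0$). Once one has the inclusion
\begin{equation*}
\C(x_1^{m_1},\dots,x_n^{m_n})\subseteq\smd\bigl(\C(x_1,\dots,x_n)^{\ast m}\bigr),\qquad m=m_1\cdots m_n,
\end{equation*}
the lemma follows at once from the elementary facts that $\Z^{\ast r}\subseteq\langle\Z\rangle_r$ for every $r\ge0$ (immediate induction on $r$ from $\langle\Z\rangle_r=\langle\langle\Z\rangle_{r-1}\ast\langle\Z\rangle\rangle$ and monotonicity of $\ast$) and that $\langle\Z\rangle_r$ is closed under direct summands.

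To prove the displayed inclusion I would induct on $n$ while carrying a block of ``passenger'' elements $w_1,\dots,w_k$ along; that is, I would prove $\C(w_1,\dots,w_k,x_1^{m_1},\dots,x_n^{m_n})\subseteq\smd\bigl(\C(w_1,\dots,w_k,x_1,\dots,x_n)^{\ast m}\bigr)$ for all $k\ge0$, the lemma being the case $k=0$. The base $n=0$ is the tautology $\C(w_1,\dots,w_k)=\smd(\C(w_1,\dots,w_k))$, since $\C$ of a sequence is closed under summands and the empty product is $1$. For the inductive step, fix $n$ and the passengers and induct on $m_n$: when $m_n\ge2$, apply Proposition~\ref{multiplication-proposition} with passenger list $w_1,\dots,w_k,x_1^{m_1},\dots,x_{n-1}^{m_{n-1}}$ and factorization $x_n^{m_n}=x_n^{m_n-1}\cdot x_n$, substitute the inner induction hypothesis for $x_n^{m_n-1}$, and use Lemma~\ref{2} to absorb the resulting summand closures; this yields
\begin{equation*}
\C(w_1,\dots,w_k,x_1^{m_1},\dots,x_n^{m_n})\subseteq\smd\bigl(\C(w_1,\dots,w_k,x_1^{m_1},\dots,x_{n-1}^{m_{n-1}},x_n)^{\ast m_n}\bigr).
\end{equation*}
Then apply the outer induction hypothesis to the subcategory on the right, with the $n-1$ variables $x_1,\dots,x_{n-1}$ and the enlarged passenger block $w_1,\dots,w_k,x_n$; since $\C$ of a sequence depends only on the underlying set (it is $\bigcap_i\C(x_i)$), the passengers may be reordered freely, so one gets that subcategory inside $\smd\bigl(\C(w_1,\dots,w_k,x_1,\dots,x_n)^{\ast(m_1\cdots m_{n-1})}\bigr)$. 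Feeding this back, and using Lemma~\ref{2} once more---in the iterated form $\smd(\X_1\ast\cdots\ast\X_r)=\smd((\smd\X_1)\ast\cdots\ast(\smd\X_r))$, which follows by repeatedly applying the two equalities stated there---together with associativity of $\ast$, the nested closures collapse to $\smd\bigl(\C(w_1,\dots,w_k,x_1,\dots,x_n)^{\ast m}\bigr)$, completing the induction.

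All of the triangulated-category input is thus borrowed from Proposition~\ref{multiplication-proposition} and Lemma~\ref{2}; the only real work is organizing the double induction and keeping track of which summand closures get absorbed, and this is where I expect the (purely formal) friction to lie. I would dispose of the degenerate cases at the outset: if some $m_i=0$ then $x_i^{m_i}=1$ is a unit, so the left-hand side $\C(w_1,\dots,x_i^{m_i},\dots)$ is the zero subcategory, and since $m=0$ the right-hand side is $\smd(\,\cdot^{\ast0})=0$ (and $\langle\C(x_1,\dots,x_n)\rangle_0=0$) as well, so there is nothing to prove; hence one may assume all $m_i\ge1$, whereupon the inner base case $m_n=1$ is the tautology $\C(\dots,x_n)=\smd(\C(\dots,x_n))$ and the step proceeds as above.
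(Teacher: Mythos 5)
Your proposal is correct and follows essentially the same route as the paper: a double induction (outer on $n$, inner on $m_n$) driven by Proposition~\ref{multiplication-proposition} applied to the factorization $x_n^{m_n}=x_n^{m_n-1}\cdot x_n$ with the remaining elements carried along, with Lemma~\ref{2} absorbing the summand closures. Your version is somewhat more explicit (the ``passenger'' bookkeeping and the intermediate inclusion into $\smd$ of the $m$-fold $\ast$-power, which then sits inside $\langle\C(x_1,\dots,x_n)\rangle_m$), but the underlying argument is the one the paper sketches.
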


\begin{proof}
If $m_i=0$ for some $1\le i\le n$, then $\C(x_1^{m_1},\dots,x_n^{m_n})=\bigcap_{j=1}^n\C(x_j^{m_j})\subseteq\C(x_i^{m_i})=\C(1)=0$, and the assertion obviously holds. Assume that $m_1,\dots,m_n$ are all positive.
By induction on $m_n$, it follows from Proposition \ref{multiplication-proposition} and Lemma \ref{2} that we should have $\C(x_1^{m_1}, \dots, x_n^{m_n}) \subseteq  \langle \C(x_1^{m_1}, \dots, x_{n-1}^{m_{n-1}}, x_n) \rangle_{m_n}$. Then, the proof is finished by induction on $n$.
\end{proof}

As an application of Lemma \ref{product-of-powers}, we get the following.

\begin{lemma}\label{lemma-with-omega}
Let $x_1, \dots, x_n \in R$ and assume that $m_1, \dots, m_n$ are such that $x_1^{m_1}, \dots, x_n^{m_n} \in \Ann \ds(R)$. Let $m = m_1\cdots m_n$ and put $\omega_i = \dfrac{m}{m_i}a_i$ where $a_i$ is a nonnegative integer and $1 \leq i \leq n$. Then, we have
$$
\C(x_i^{a_i}) \subseteq \langle \C(x_1, \dots, x_n) \rangle_{\omega_i}.
$$
\end{lemma}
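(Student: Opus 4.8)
The plan is to deduce the inclusion straight from Lemma \ref{product-of-powers} by a padding trick. The key observation is that the hypothesis $x_j^{m_j}\in\Ann\ds(R)$ says precisely that $\C(x_j^{m_j})=\ds(R)$ for every $j$, so that intersecting $\C(x_i^{a_i})$ with the subcategories $\C(x_j^{m_j})$ for $j\ne i$ does not change it. Spelling out the definition of $\C(-)$ for a sequence, this gives
$$
\C(x_i^{a_i})=\C(x_i^{a_i})\cap\bigcap_{j\ne i}\C(x_j^{m_j})=\C(x_1^{m_1},\dots,x_{i-1}^{m_{i-1}},x_i^{a_i},x_{i+1}^{m_{i+1}},\dots,x_n^{m_n}).
$$

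First I would record this equality. Then I would apply Lemma \ref{product-of-powers} to the sequence $x_1,\dots,x_n$ together with the exponent tuple $(m_1,\dots,m_{i-1},a_i,m_{i+1},\dots,m_n)$, whose entries are nonnegative integers; the lemma then gives
$$
\C(x_1^{m_1},\dots,x_i^{a_i},\dots,x_n^{m_n})\subseteq\langle\C(x_1,\dots,x_n)\rangle_\ell,
$$
where $\ell$ is the product of the entries of the tuple. Finally I would compute $\ell=a_i\prod_{j\ne i}m_j=\frac{m}{m_i}a_i=\omega_i$, which is exactly what is claimed.

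I do not expect any real obstacle here: once the padding equality is noticed, everything else is bookkeeping, and the only thing to be careful about is the degenerate cases. If $a_i=0$ then $\C(x_i^{a_i})=\C(1)=0$, $\omega_i=0$, and $\langle\C(x_1,\dots,x_n)\rangle_0=0$, so the inclusion is trivial, and it is anyway covered by Lemma \ref{product-of-powers} with a vanishing exponent. Also, since $x_j^{m_j}\in\Ann\ds(R)$ we may assume every $m_j\ge1$ (otherwise $1\in\Ann\ds(R)$ and $\ds(R)=0$, and the statement is vacuous), so $m/m_i$ is a well-defined positive integer and the expression for $\omega_i$ makes sense.
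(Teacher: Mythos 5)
Your argument is correct and is essentially identical to the paper's proof: both observe that $\C(x_j^{m_j})=\ds(R)$ for $j\ne i$ so that $\C(x_i^{a_i})$ equals $\C(x_1^{m_1},\dots,x_i^{a_i},\dots,x_n^{m_n})$, and then apply Lemma \ref{product-of-powers} to that exponent tuple, whose product of entries is $\omega_i$. Your extra remarks on the degenerate cases are fine but not needed.
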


\begin{proof}
Note that for any $1 \leq j \leq n$, we have $\C(x_j^{m_j}) = \ds(R)$ as $x_j^{m_j} \in \Ann \ds(R)$. Therefore, we have $\C(x_i^{a_i}) = \C(x_1^{m_1}, \dots, x_{i-1}^{m_{i-1}}, x_i^{a_i}, x_{i+1}^{m_{i+1}}, \dots, x_n^{m_n})$. Then, the result follows from Lemma \ref{product-of-powers}.
\end{proof}

Combining our results, we have the following proposition.

\begin{proposition}\label{last-proposition-before-the-new-main-theorem}
Let $x_1, \ldots, x_n \in R$ and assume that $a_1, \ldots, a_n$ are nonnegative integers. Let $\omega_1,\dots,\omega_n$ be as in Lemma \ref{lemma-with-omega}. Then, we have
\begin{align*}
    \C(x_1^{a_1} \cdots x_n^{a_n}) \subseteq \langle \C(x_1, \dots, x_n)\rangle_{\omega_1 + \cdots + \omega_n}. 
\end{align*}
In particular, if $a_1, \ldots, a_n$ are such that $x_1^{a_1} \cdots x_n^{a_n} \in \Ann \ds(R)$, then we have
\begin{align*}
    \ds(R) = \langle \C(x_1, \dots, x_n)\rangle_{\omega_1 + \cdots + \omega_n}.
\end{align*}
\end{proposition}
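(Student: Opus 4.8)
The plan is to reduce the statement to Proposition~\ref{5} and Lemma~\ref{lemma-with-omega}, together with the sub-additivity of the ``building'' operation along $\ast$ for the balls $\langle-\rangle_n$ that was already used at the end of the proof of Theorem~\ref{7}. Write $\xx=x_1,\dots,x_n$.

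First I would apply Proposition~\ref{5} to the $n$ elements $x_1^{a_1},\dots,x_n^{a_n}$, obtaining
$$
\C(x_1^{a_1}\cdots x_n^{a_n})=\smd\bigl(\C(x_1^{a_1})\ast\cdots\ast\C(x_n^{a_n})\bigr).
$$
Next, for each $i$ Lemma~\ref{lemma-with-omega} gives $\C(x_i^{a_i})\subseteq\langle\C(\xx)\rangle_{\omega_i}$; note this also covers the degenerate case $a_i=0$, since then $\omega_i=0$ and $\C(x_i^{0})=\C(1)=0=\langle\C(\xx)\rangle_0$. By the evident monotonicity of $\ast$ in each argument, this yields
$$
\C(x_1^{a_1})\ast\cdots\ast\C(x_n^{a_n})\subseteq\langle\C(\xx)\rangle_{\omega_1}\ast\cdots\ast\langle\C(\xx)\rangle_{\omega_n}.
$$
Then I would invoke the inclusion $\langle\X\rangle_{p}\ast\langle\X\rangle_{q}\subseteq\langle\X\rangle_{p+q}$, valid for any subcategory $\X$ and any $p,q\ge0$ (proved by induction on $q$ from the associativity of $\ast$ and the defining formula $\langle\X\rangle_{k}=\langle\langle\X\rangle_{k-1}\ast\langle\X\rangle\rangle$), and apply it iteratively to collapse the right-hand side into $\langle\C(\xx)\rangle_{\omega_1+\cdots+\omega_n}$. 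Since $\langle\C(\xx)\rangle_{\omega_1+\cdots+\omega_n}$ is by construction closed under direct summands, the outer $\smd$ in the first display is absorbed, and we obtain the asserted inclusion $\C(x_1^{a_1}\cdots x_n^{a_n})\subseteq\langle\C(\xx)\rangle_{\omega_1+\cdots+\omega_n}$.

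For the second assertion, recall that $x_1^{a_1}\cdots x_n^{a_n}\in\Ann\ds(R)$ means precisely $\C(x_1^{a_1}\cdots x_n^{a_n})=\ds(R)$. Combining this with the inclusion just proved and the trivial reverse inclusion $\langle\C(\xx)\rangle_{\omega_1+\cdots+\omega_n}\subseteq\ds(R)$ gives the equality $\ds(R)=\langle\C(\xx)\rangle_{\omega_1+\cdots+\omega_n}$.

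The only step here that is not purely formal bookkeeping of the earlier results is the sub-additivity $\langle\X\rangle_{p}\ast\langle\X\rangle_{q}\subseteq\langle\X\rangle_{p+q}$: the subtlety is that each ball $\langle-\rangle_k$ carries an additive closure wrapped around every $\ast$-layer, so one has to check that these closures can be commuted past $\ast$ without raising the index — which is exactly the manipulation already implicit in the chain of inclusions at the end of the proof of Theorem~\ref{7}, and can be recorded separately as a short lemma if a self-contained reference is wanted. Everything else follows directly from Proposition~\ref{5}, Lemma~\ref{lemma-with-omega}, and the definitions.
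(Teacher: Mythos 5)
Your proposal is correct and follows essentially the same route as the paper's own proof: apply Proposition~\ref{5} to $x_1^{a_1},\dots,x_n^{a_n}$, bound each $\C(x_i^{a_i})$ via Lemma~\ref{lemma-with-omega}, and collapse using the sub-additivity $\langle\X\rangle_{p}\ast\langle\X\rangle_{q}\subseteq\langle\X\rangle_{p+q}$. The only difference is that you spell out this last collapsing step (and the degenerate case $a_i=0$), which the paper compresses into a single ``Therefore''.
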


\begin{proof}
Applying Proposition \ref{5}, we have $\C(x_1^{a_1}\cdots x_n^{a_n})=\smd(\C(x_1^{a_1})\ast\cdots\ast\C(x_n^{a_n}))$.
It follows from Lemma \ref{lemma-with-omega} that $\C(x_i^{a_i})\subseteq\langle\C(x_1,\dots,x_n)\rangle_{\omega_i}$ for each $1\le i\le n$.
Therefore, $\C(x_1^{a_1} \cdots x_n^{a_n}) \subseteq \langle \C(x_1, \dots, x_n)\rangle_{\omega_1 + \cdots + \omega_n}$. 
\end{proof}

We are now ready to state our main theorem in this section.
We should compare it with Theorem \ref{7}.

\begin{theorem}\label{new-version-of-theorem}
Let $\xx = x_1,  \ldots, x_n$ a regular sequence on $R$ such that $x_1^{m_1}, \ldots, x_n^{m_n} \in  \Ann \ds(R)$ for some positive integers $m_1, \ldots, m_n$. Assume that $a_1, \cdots, a_n$ are nonnegative integers such that $x_1^{a_1}\cdots x_n^{a_n} \in \Ann \ds(R)$.
\begin{enumerate}[\rm(1)]
\item
There is an inequality
$$
\dim \ds(R) \leq \omega \left( \dim \ds(R/\xx R) + 1 \right) - 1,
$$
where $\omega = m_1 \cdots m_n(\frac{a_1}{m_1} + \cdots + \frac{a_n}{m_n})$.
\item
There is an inequality
$$
\dim \ds(R) \leq m\,( \dim \ds(R/\xx R) + 1) - 1,
$$
where $m=m_1\cdots m_n$.
\end{enumerate}
\end{theorem}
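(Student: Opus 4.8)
The plan is to deduce part (2) from part (1), and to obtain part (1) by combining Proposition \ref{last-proposition-before-the-new-main-theorem} with a change-of-rings argument in the spirit of the proof of Theorem \ref{7}. For the reduction, I would take $a_1=m_1$ and $a_2=\dots=a_n=0$: then $x_1^{a_1}\cdots x_n^{a_n}=x_1^{m_1}\in\Ann\ds(R)$, while $\omega=m_1\cdots m_n\cdot\frac{m_1}{m_1}=m$, so (1) specializes to exactly the bound in (2). It therefore suffices to prove (1). We may assume $\ds(R)\neq0$ (which forces $\omega\geq1$, since otherwise all $a_i=0$ and then $1=x_1^{a_1}\cdots x_n^{a_n}\in\Ann\ds(R)$) and $d:=\dim\ds(R/\xx R)<\infty$, there being nothing to prove otherwise.

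First I would observe that, writing $m=m_1\cdots m_n$ and $\omega_i=\frac{m}{m_i}a_i$ as in Lemma \ref{lemma-with-omega}, one has $\omega_1+\cdots+\omega_n=m\sum_i\frac{a_i}{m_i}=\omega$, so Proposition \ref{last-proposition-before-the-new-main-theorem} gives the equality $\ds(R)=\langle\C(\xx)\rangle_{\omega}$. The heart of the proof is then to bound $\C(\xx)$, as a subcategory of $\ds(R)$, inside a single-object ball coming from $\ds(R/\xx R)$. By Proposition \ref{11}(2) we have $\C(\xx)=\smd\{\k(\xx)\otimes X\mid X\in\ds(R)\}$. Since $\xx$ is a regular sequence, the Koszul complex $\k(\xx)$ is a free resolution of $R/\xx R$ over $R$, and $R/\xx R$ is moreover a perfect $R$-complex; hence, exactly as in the proof of Theorem \ref{7}, the surjection $R\to R/\xx R$ induces an exact functor $\rho\colon\ds(R/\xx R)\to\ds(R)$, and for every $X\in\ds(R)$ there are isomorphisms $\k(\xx)\otimes X\cong R/\xx R\lten_R X\cong\rho\bigl(X\lten_R R/\xx R\bigr)$ in $\ds(R)$, where $X\lten_R R/\xx R$ is viewed as an object of $\ds(R/\xx R)$ via derived base change.

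Now I would fix $G\in\ds(R/\xx R)$ with $\ds(R/\xx R)=\langle G\rangle_{d+1}^{\ds(R/\xx R)}$. For each $X\in\ds(R)$ the object $X\lten_R R/\xx R$ lies in $\langle G\rangle_{d+1}^{\ds(R/\xx R)}$, so applying the exact functor $\rho$ gives $\k(\xx)\otimes X\in\langle\rho(G)\rangle_{d+1}^{\ds(R)}$; since $\langle\rho(G)\rangle_{d+1}$ is closed under direct summands, $\C(\xx)\subseteq\langle\rho(G)\rangle_{d+1}$. Substituting this into the equality above and using the standard inequality $\langle\langle\X\rangle_{a}\rangle_{b}\subseteq\langle\X\rangle_{ab}$ for the ball operators, I get
$$
\ds(R)=\langle\C(\xx)\rangle_{\omega}\subseteq\bigl\langle\langle\rho(G)\rangle_{d+1}\bigr\rangle_{\omega}\subseteq\langle\rho(G)\rangle_{\omega(d+1)}\subseteq\ds(R),
$$
so $\ds(R)=\langle\rho(G)\rangle_{\omega(d+1)}$ and therefore $\dim\ds(R)\leq\omega(d+1)-1=\omega(\dim\ds(R/\xx R)+1)-1$, which is (1).

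I expect the main obstacle to be the change-of-rings step, i.e. making precise that $\C(\xx)\subseteq\ds(R)$ sits inside the $(d+1)$-st ball around (the image of) a single thick generator of $\ds(R/\xx R)$. This rests on three facts, all of which use that $\xx$ is a regular sequence: (i) $\k(\xx)$ resolves $R/\xx R$, so $\k(\xx)\otimes X\cong R/\xx R\lten_R X$ in $\ds(R)$; (ii) $R/\xx R$ is perfect over $R$, so $\rho\colon\ds(R/\xx R)\to\ds(R)$ is a well-defined exact functor; and (iii) the compatibility $\rho\bigl(X\lten_R R/\xx R\bigr)\cong\k(\xx)\otimes X$. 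The remaining ingredients—the identity $\omega_1+\cdots+\omega_n=\omega$, the composition-of-balls inequality, and the reduction of (2) to (1)—are routine.
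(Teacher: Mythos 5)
Your proposal is correct and follows essentially the same route as the paper's proof: reduce (2) to (1) via $a_1=m_1$, $a_2=\cdots=a_n=0$, note $\omega_1+\cdots+\omega_n=\omega$ so that Proposition \ref{last-proposition-before-the-new-main-theorem} gives $\ds(R)=\langle\C(\xx)\rangle_\omega$, and then use Proposition \ref{11}(2) together with the change-of-rings functor $\ds(R/\xx R)\to\ds(R)$ to place $\C(\xx)$ inside $\langle G\rangle_{d+1}$. Your write-up is in fact slightly more careful than the paper's (the explicit compatibility $\rho(X\lten_R R/\xx R)\cong\k(\xx)\otimes X$, the ball-composition inequality, and the degenerate case $\omega=0$), but the argument is the same.
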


\begin{proof}
The second assertion follows from the first by letting $a_1=m_1$ and $a_2=\cdots=a_n=0$. In the following, we show the first assertion.
The proof is similar to the proof of Theorem \ref{7}; we use Proposition \ref{11}(2) instead of Lemma \ref{4}(1).
    Let us assume that $\dim\ds(R/\xx R) = d < \infty$. Then, there exists an object $G \in \ds(R/\xx R)$ such that $\ds(R/\xx R) = \langle G \rangle_{d+1}^{\ds(R/\xx R)}$. Since $\xx$ is a regular sequence, we have
    \begin{align*}
        \C(\xx) = \smd\{ \k(\xx) \otimes X \mid X \in \ds(R) \}= \smd\{R/\xx R\lten_RX\mid X\in\ds(R)\}
    \end{align*}
    and
    \begin{align*}
        R/\xx R\lten_RX\in\ds(R/\xx R)=\langle G \rangle_{d+1}^{\ds(R/\xx R)}.
    \end{align*}
    By applying the exact functor $\ds(R/ \xx R) \to \ds(R)$ induced by the natural projection $R \to R/\xx R$, we see that there is an inclusion $\C(\xx) \subseteq \langle G \rangle_{d+1}^{\ds(R)}$. Combining this with Proposition \ref{last-proposition-before-the-new-main-theorem}, we see that $\ds(R) = \langle G \rangle_{\omega(d+1)}$ which finishes the proof.
\end{proof}

\section{Applications} 

In this section, we provide some applications of Theorems \ref{7} and \ref{new-version-of-theorem}.
These two results prove useful when the singularity categories of the quotients on the right hand side are in fact of dimension zero.
We begin with recalling some definitions.

\begin{definition}
\begin{enumerate}[(1)]
\item
Suppose that $R$ is a Gorenstein local ring.
By $\MCM(R)$ we denote the category of maximal Cohen--Macaulay modules and by $\sMCM(R)$ the corresponding stable category.
The celebrated theorem due to Buchweitz \cite[Theorem 4.4.1]{B} asserts that the assignment $M\mapsto M$ gives a triangle equivalence
$$
\Delta:\underline{\MCM}(R)\xrightarrow{\cong}\ds(R).
$$
\item
Recall that the \textit{cohomology annihilator ideal} $\ca(R)$ of a commutative Noetherian ring $R$ is defined as 
\begin{align*}
    \ca(R) = \bigcup_{n =0}^\infty \{ r \in R \mid r \in \ann_R \Ext_R^i(M,N) \text{ for any } i \geq n \text{ and } M, N \in \module R \}.
\end{align*}
When $R$ is a Gorenstein local ring, it follows from \cite{E} that
$$
\ca(R)=\ann\ds(R).
$$
More precisely, it is equal to the ideal consisting of those $r \in R$ which annihilate every Hom-set in the stable category of maximal Cohen--Macaulay modules.
\end{enumerate}
\end{definition}

\begin{corollary}\label{8}
Let $(R,\m)$ be a Gorenstein local ring.
Let $x_1,\dots,x_n\in\m$ be non-zerodivisors such that the product $x_1\cdots x_n$ belongs to the cohomology annihilator $\ca(R)$.
Suppose that each $R/x_iR$ has finite CM-representation type.
Then $\dim\underline{\MCM}(R)\le n-1$.
\end{corollary}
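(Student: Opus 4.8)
The plan is to deduce the statement directly from Theorem~\ref{7}, using the structure of rings of finite Cohen--Macaulay representation type to kill the terms on the right-hand side. First I would translate the hypothesis: since $R$ is Gorenstein local, the identification $\ca(R)=\Ann\ds(R)$ recalled above shows that $x_1\cdots x_n\in\ca(R)$ is the same as $x_1\cdots x_n\in\Ann\ds(R)$. Each $x_i$ lies in $\m$, hence is a non-unit, and is a non-zerodivisor by assumption, so all hypotheses of Theorem~\ref{7} are satisfied and we obtain
$$
\dim\ds(R)\le\sum_{i=1}^n\dim\ds(R/x_iR)+n-1.
$$

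Next I would show that each summand $\dim\ds(R/x_iR)$ is zero. Because $x_i$ is a non-zerodivisor in $\m$ and $R$ is Gorenstein, the quotient $R/x_iR$ is again a Gorenstein local ring, so Buchweitz's equivalence gives a triangle equivalence $\ds(R/x_iR)\simeq\sMCM(R/x_iR)$. Since $R/x_iR$ has finite CM-representation type, let $G_i$ be the direct sum of a representative set of the finitely many indecomposable maximal Cohen--Macaulay $R/x_iR$-modules; then every object of $\sMCM(R/x_iR)$ is, as an $R/x_iR$-module, a direct summand of a finite direct sum of copies of $G_i$. Hence $\langle G_i\rangle_1^{\sMCM(R/x_iR)}=\sMCM(R/x_iR)$, and therefore $\dim\ds(R/x_iR)=\dim\sMCM(R/x_iR)=0$.

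Substituting these vanishings into the displayed inequality yields $\dim\ds(R)\le n-1$, and a final application of Buchweitz's equivalence $\ds(R)\simeq\sMCM(R)$ transports this to $\dim\sMCM(R)\le n-1$, which is the assertion. The only step with genuine content beyond bookkeeping is the computation $\dim\ds(R/x_iR)=0$: one must know that finite CM-representation type forces the stable category of maximal Cohen--Macaulay modules to be generated in a single step, which rests on the Krull--Schmidt property for maximal Cohen--Macaulay $R/x_iR$-modules. Under the standing hypotheses this is harmless, but it is precisely where the assumption ``finite CM type'' is used, and I expect it to be the place a careful write-up needs the most attention.
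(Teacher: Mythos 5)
Your proposal is correct and follows essentially the same route as the paper's own proof: translate $\ca(R)=\Ann\ds(R)$ via the Gorenstein hypothesis, observe that finite CM-representation type gives $\dim\ds(R/x_iR)=\dim\sMCM(R/x_iR)\le 0$ (with the additive generator built from the finitely many indecomposables), and apply Theorem~\ref{7}. Your write-up merely makes explicit the details (Buchweitz's equivalence for the quotients, the Krull--Schmidt point) that the paper leaves implicit.
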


\begin{proof}
There is an equality $\ca(R)=\Ann\ds(R)$.
Since each $R/x_iR$ has finite CM-representation type, we have $\dim\ds(R/x_iR)=\dim\underline{\MCM}(R/x_iR)\le0$.
The assertion now follows from Theorem \ref{7}.
\end{proof}

Let $(S,\n)$ be a regular local ring, and let $R=S/(f)$, where $0\ne f\in\n^2$.
We say that $R$ is a {\em simple singularity} if there exist only finitely many ideals $I$ of $R$ such that $f\in I^2$. When $R=k[\![x_1,\dots,x_n]\!]/(f_1,\dots,f_m)$ with $k$ a field, we denote by $\Jac R$ the {\em Jacobian ideal} of $R$, which is defined as the ideal of $R$ generated by the $h$-minors of the Jacobian matrix $(\frac{\partial f_i}{\partial x_j})$, where $h$ is the height of the ideal $(f_1,\dots,f_m)$ of the formal power series ring $k[\![x_1,\dots,x_n]\!]$.

\begin{corollary}\label{9}
Let $d\ge1$ and $e\ge2$ be integers.
Let $k$ be an algebraically closed field whose characteristic is neither $2,3,5$ nor divides $e$.
Let $f\in(x_1,\dots,x_d)^2k[\![x_1,\dots,x_d]\!]$ be such that $k[\![x_1,\dots,x_d]\!]/(f)$ is a simple singularity.
Let $R=k[\![x_0,\dots,x_d]\!]/(x_0^e+f)$.
Then $\dim\underline{\MCM}(R)\le e-2$.
\end{corollary}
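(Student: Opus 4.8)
The plan is to reduce the statement to Theorem~\ref{7} (equivalently Theorem~\ref{new-version-of-theorem}) applied to the single element $x_0$, used with multiplicity $e-1$. Here is the setup. Since $R=k[\![x_0,\dots,x_d]\!]/(x_0^e+f)$ is a hypersurface, it is a complete Gorenstein local ring, so by Buchweitz's theorem \cite{B} there is a triangle equivalence $\underline{\MCM}(R)\xrightarrow{\cong}\ds(R)$, whence $\dim\underline{\MCM}(R)=\dim\ds(R)$; moreover $\Ann\ds(R)=\ca(R)$. As $f$ involves only the variables $x_1,\dots,x_d$, we have $R/x_0R=k[\![x_0,\dots,x_d]\!]/(x_0,x_0^e+f)=k[\![x_1,\dots,x_d]\!]/(f)$. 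Finally, $x_0$ is a non-zerodivisor on $R$: since $0\ne f\in k[\![x_1,\dots,x_d]\!]$, the element $x_0$ does not divide $x_0^e+f$ in the factorial ring $k[\![x_0,\dots,x_d]\!]$, so $x_0$ lies in no associated (equivalently minimal) prime of $(x_0^e+f)$; and $x_0$ is plainly a non-unit.

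Next I would pin down $\dim\ds(R/x_0R)$. By hypothesis $k[\![x_1,\dots,x_d]\!]/(f)$ is a simple singularity, and since $k$ is algebraically closed with $\Char k\notin\{2,3,5\}$, the classification of simple hypersurface singularities shows that it has finite CM-representation type (this is where the assumptions on $\Char k$ other than $\Char k\nmid e$ enter). As $k[\![x_1,\dots,x_d]\!]/(f)$ is singular (because $0\ne f\in\n^2$), it follows that $\dim\ds(R/x_0R)=\dim\underline{\MCM}(R/x_0R)=0$.

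The key step is to verify that $x_0^{e-1}\in\Ann\ds(R)$. Because $\Char k\nmid e$, the integer $e$ is a unit of $R$, so $x_0^{e-1}=e^{-1}\,\partial(x_0^e+f)/\partial x_0$ lies in the Jacobian ideal $\Jac R$ (for the hypersurface $R=k[\![x_0,\dots,x_d]\!]/(g)$ the ideal $(g)$ has height $1$, so $\Jac R$ is generated by the first partial derivatives $\partial g/\partial x_0,\dots,\partial g/\partial x_d$). It then suffices to recall that $\Jac R\subseteq\ca(R)$ for a hypersurface. Concretely: if $M$ is a maximal Cohen--Macaulay $R$-module without free summand, presented by a reduced matrix factorization $(\varphi,\psi)$ of $g$ over the ambient regular ring, then differentiating $\varphi\psi=g\cdot\mathrm{id}$ and reducing modulo $g$ shows that multiplication by $\partial g/\partial x_i$ on $M\cong\operatorname{coker}\varphi$ factors as $\operatorname{coker}\varphi\xrightarrow{\bar\psi}R^n\xrightarrow{\partial\varphi/\partial x_i}R^n\twoheadrightarrow\operatorname{coker}\varphi$, hence through a free module; therefore each $\partial g/\partial x_i$ annihilates $\sEnd_R(M)$ for every MCM $M$, and consequently annihilates every Hom-set of $\underline{\MCM}(R)$, i.e.\ lies in $\ca(R)$. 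Hence $x_0^{e-1}\in\Jac R\subseteq\ca(R)=\Ann\ds(R)$.

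Combining these, I would apply Theorem~\ref{7} to the sequence $x_0,\dots,x_0$ of length $e-1$, which is legitimate since $e\ge2$, $x_0$ is neither a unit nor a zerodivisor, and the product $x_0^{e-1}$ lies in $\Ann\ds(R)$. This gives
$$
\dim\ds(R)\le\sum_{i=1}^{e-1}\dim\ds(R/x_0R)+(e-1)-1=0+(e-2)=e-2,
$$
and the same bound follows from Theorem~\ref{new-version-of-theorem}(2) with $n=1$, $x_1=x_0$ and $m_1=e-1$. Transporting along Buchweitz's equivalence yields $\dim\underline{\MCM}(R)\le e-2$. I expect the main obstacle to be this key step, namely establishing $x_0^{e-1}\in\Ann\ds(R)$, equivalently the inclusion $\Jac R\subseteq\ca(R)$ for hypersurfaces, which is also the precise point where $\Char k\nmid e$ is used; the classification of simple singularities (needed only for finite CM-representation type of $R/x_0R$) is the sole other non-formal input, and everything else is a direct application of the earlier theorems.
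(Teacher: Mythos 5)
Your proposal is correct and follows essentially the same route as the paper: the paper deduces the bound from Corollary \ref{8} (itself a direct specialization of Theorem \ref{7}) applied to the sequence $x_0,\dots,x_0$ of length $e-1$, using $x_0^{e-1}=e^{-1}(ex_0^{e-1})\in\Jac R\subseteq\ca(R)$ (citing \cite[Example 2.7]{ua} where you instead sketch the matrix-factorization argument) and the finite CM-representation type of the simple singularity $R/x_0R$ via \cite[Theorem 9.8]{LW}. The only differences are cosmetic: you invoke Theorem \ref{7} directly rather than through Corollary \ref{8}, and you supply a proof of $\Jac R\subseteq\ca(R)$ where the paper cites the literature.
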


\begin{proof}
We have $x_0^{e-1}=e^{-1}(ex_0^{e-1})\in\Jac R\subseteq\ca(R)$ by \cite[Example 2.7]{ua}.
Also, $R/x_0R=k[\![x_1,\dots,x_d]\!]/(f)$ is a simple singularity, so that it has finite CM-representation type by \cite[Theorem 9.8]{LW}.
By virtue of Corollary \ref{8}, we get $\dim\underline{\MCM}(R)\le(e-1)-1=e-2$.
\end{proof}

Let $(R,\m)$ be a local ring.
We denote by $\ell(R)$ the length of $R$ as an $R$-module, and by $\ell\ell(R)$ the {\em Loewy length} of $R$, that is, the infimum of integers $n\ge0$ such that $\m^n=0$.
For an $\m$-primary ideal $I$ of $R$, we denote by $\e(I)$ the {\em (Hilbert--Samuel) multiplicity} of $I$, i.e., $\e(I)=\lim_{n\to\infty}\frac{d!}{n^d}\ell(R/I^{n+1})\in\N$, where $d=\dim R$.

\begin{example}\label{0}
Let $R=\CC[\![x,y]\!]/(x^a-y^b)$ with $2\le a\le b$.
Then it is easy to observe that $\CC[\![y]\!]/(y^b)$ is a simple singularity.
Corollary \ref{9} gives rise to the inequality
\begin{equation}\label{102}
\dim\underline{\MCM}(R)\le a-2.
\end{equation}
Let $J=\Jac R$ be the Jacobian ideal of $R$.
Then we have $J=(x^{a-1},y^{b-1})$ and $R/J=\CC[\![x,y]\!]/(x^{a-1},y^{b-1})$.
We easily see that $\ell\ell(R/J)=(a-1)+(b-1)-1=a+b-3$.
It follows from \cite[Proposition 4.11]{BFK} or \cite[Corollary 1.3(1)]{sing} that
$$
\dim\underline{\MCM}(R)\le2(a+b-3)-1=2(a+b)-7=(a-2)+(a+2b-5).
$$
Take the parameter ideal $Q=(x^{a-1})$ of $R$ contained in $J$.
We claim that $J^a=QJ^{a-1}$.
In fact, we have $J^a=(x^{a-1},y^{b-1})^a=(\{(x^{a-1})^i(y^{b-1})^{a-i}\}_{i=1}^a)+(y^{(b-1)a})$, and
$$
(x^{a-1})^i(y^{b-1})^{a-i}=(x^a)^{i-1}x^{a-i}y^{(b-1)(a-i)}=(y^b)^{i-1}x^{a-i}y^{(b-1)(a-i)}=x^{a-i}y^{(a-1)(b-1)}y^{i-1}.
$$
As $a\le b$, we have $y^{b-1}\in(y^{a-1})$.
There are equalities
\begin{align*}
J^a&=y^{(a-1)(b-1)}(\{x^{a-i}y^{i-1}\}_{i=1}^a,y^{b-1})=y^{(a-1)(b-1)}(x^{a-1},x^{a-2}y,\dots,xy^{a-2},y^{a-1},y^{b-1})\\
&=y^{(a-1)(b-1)}(x^{a-1},x^{a-2}y,\dots,xy^{a-2},y^{a-1})=y^{(a-1)(b-1)}(x,y)^{a-1}.
\end{align*}
Therefore, we get
\begin{align*}
QJ^{a-1}&=x^{a-1}(x^{a-1},y^{a-1})^{a-1}=(x(x^{a-1},y^{b-1}))^{a-1}=(x^a,xy^{b-1})^{a-1}\\
&=(y^b,xy^{b-1})^{a-1}=(y^{b-1}(y,x))^{a-1}=y^{(a-1)(b-1)}(x,y)^{a-1}=J^a.
\end{align*}
This claim says that the parameter ideal $Q$ is a reduction of $J$, and we obtain $\e(J)=\ell(R/Q)=\ell(\CC[\![x,y]\!]/(x^{a-1},y^b))=(a-1)b$.
It follows from \cite[Corollary 1.3(2)]{sing} that
$$
\dim\underline{\MCM}(R)\le(a-1)b-1=(a-2)+(a-1)(b-1).
$$
As $a,b$ are at least $2$, both of the integers $a+2b-5$ and $(a-1)(b-1)$ are positive.
Therefore, the upper bound \eqref{102} for the dimension of the triangulated category $\underline{\MCM}(R)$ produced by Corollary \ref{9} is better than the upper bounds produced by \cite{BFK,sing}.
Furthermore, we should notice that when $a=3$ and $b\ge6$, the ring $R$ is not of finite CM-representation type by \cite[Chapter 9]{Y}, so that \eqref{102} and \cite[Theorem 1.2]{M} imply $\dim\underline{\MCM}(R)=1$. This also says that the inequality \eqref{102} is the best possible.
\end{example}

We should also compare Corollaries \ref{8}, \ref{9} and Example \ref{0} with \cite[Corollaries 1.2 and 1.3]{sgd}, which are stated in a similar context.

We present one more example, applying Theorem \ref{new-version-of-theorem}.

\begin{example}
Let $R = \CC[\![x,y,z,w]\!]/(f)$, where $f=x^3+y^3+xyz+w^2$.
Then, the partial derivatives of $f$ with respect to $x$, $y$ and $z$ are $3x^2 + yz$, $3y^2+xz$ and $xy$, respectively.
The equalities $x^3=\dfrac{1}{3}x(3x^2+yz)-\dfrac{1}{3}z(xy)$ and $y^3=\dfrac{1}{3}y(3y^2+xz)-\dfrac{1}{3}z(xy)$ hold in $R$. 
It follows from \cite[Example 2.7]{ua} that $x^3$, $y^3$ and $xy$ are in $\ca(R)$.
On the other hand, note that $x,y$ is a regular sequence on $R$ and
$$
R / (x,y)R \cong \CC[\![z,w]\!]/(w^2)
$$
is a complete local hypersurface of countable CM-representation type by \cite[Proposition 4.1]{BGS} or \cite[Example (6.5)]{Y} and thus $\dim \sMCM(R / (x,y)R) = 1$ by \cite[Propositions 2.4 and 2.7]{radius}. Therefore, by Theorem \ref{new-version-of-theorem}, we conclude that 
$$
\dim \sMCM(R) \le 3\cdot3\cdot\left(\frac{1}{3}+\frac{1}{3}\right)\cdot(1+1)-1=11. 
$$
\end{example}

Our results on the dimension of singularity categories require the condition that the product of some nonzerodivisors belong to $\Ann \ds(R)$. Hence, it is reasonable to ask whether this condition is too strong or when it happens. Now we recall a recent result on the annihilation of singularity categories and formulate our results from this point of view in the case of isolated singularities.

\begin{theorem}\cite[Theorem 4.6]{liu}
Let $R$ be a commutative Noetherian ring. If $\dim \ds(R) < \infty$, then 
\begin{align*}
    \sing(R) = \mathrm{V}(\Ann\ds(R)).
\end{align*}
\end{theorem}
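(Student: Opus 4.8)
The plan is to prove the two inclusions $\sing(R)\subseteq\mathrm{V}(\Ann\ds(R))$ and $\mathrm{V}(\Ann\ds(R))\subseteq\sing(R)$ separately; only the second will use the hypothesis $\dim\ds(R)<\infty$. For the first I would argue by contraposition: suppose $\mathfrak{p}\in\spec R$ with $\Ann\ds(R)\not\subseteq\mathfrak{p}$, and pick $a\in\Ann\ds(R)\setminus\mathfrak{p}$. Since $\C(a)=\ds(R)$ we have $R/\mathfrak{p}\in\C(a)$, so multiplication by $a$ on $R/\mathfrak{p}$ vanishes in $\ds(R)$; by the description of morphisms in the Verdier quotient defining $\ds(R)$, the map $R/\mathfrak{p}\xrightarrow{a}R/\mathfrak{p}$ in $\db(R)$ then factors through a perfect complex $P$. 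Localizing at $\mathfrak{p}$, where $a$ becomes a unit, the identity of the residue field $(R/\mathfrak{p})_\mathfrak{p}$ of $R_\mathfrak{p}$ factors through the perfect $R_\mathfrak{p}$-complex $P_\mathfrak{p}$; hence this residue field has finite projective dimension and $R_\mathfrak{p}$ is regular by Auslander--Buchsbaum--Serre, so $\mathfrak{p}\notin\sing(R)$. No finiteness hypothesis is needed for this direction.

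For the reverse inclusion, put $d=\dim\ds(R)<\infty$ and fix $G\in\ds(R)$ with $\ds(R)=\langle G\rangle_{d+1}$. First I would reduce from the annihilator of the whole category to that of $G$. Since $\Ann_{\ds(R)}(-)$ is unchanged under shifts and only grows under passage to a direct summand, and since Lemma \ref{4}(2) produces $B\in\C(xy)$ from an exact triangle $A\to B\to C\to A[1]$ with $A\in\C(x)$ and $C\in\C(y)$, an induction on $n$ along the triangles witnessing membership in $\langle G\rangle_n$ gives $(\Ann_{\ds(R)}G)^{\,n}\subseteq\Ann_{\ds(R)}X$ for every $X\in\langle G\rangle_n$. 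Taking $n=d+1$ yields $(\Ann_{\ds(R)}G)^{\,d+1}\subseteq\bigcap_{X}\Ann_{\ds(R)}X=\Ann\ds(R)$, and since $\Ann\ds(R)\subseteq\Ann_{\ds(R)}G$ trivially, the ideals $\Ann\ds(R)$ and $\Ann_{\ds(R)}G$ have the same radical. Hence $\mathrm{V}(\Ann\ds(R))=\mathrm{V}(\Ann_{\ds(R)}G)$, so it remains to show that $\Ann_{\ds(R)}(G)\not\subseteq\mathfrak{p}$ for every prime $\mathfrak{p}$ with $R_\mathfrak{p}$ regular.

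So fix such a prime $\mathfrak{p}$; then $\ds(R_\mathfrak{p})=0$, so $G_\mathfrak{p}$ is a perfect $R_\mathfrak{p}$-complex. It now suffices to find $s\in R\setminus\mathfrak{p}$ such that $s\cdot\mathrm{id}_G\colon G\to G$ factors through a perfect $R$-complex in $\db(R)$, because then $s\in\Ann_{\ds(R)}(G)\setminus\mathfrak{p}$. To build one, present $G_\mathfrak{p}$ as a bounded complex $Q$ of finitely generated free $R_\mathfrak{p}$-modules. Its differentials involve only finitely many elements of $R_\mathfrak{p}$; clearing denominators and then rescaling the differentials by a suitable element of $R\setminus\mathfrak{p}$ (which restores the square-zero condition, as it kills the finitely many entries by which it fails over $R$) produces a bounded complex $P$ of finitely generated free $R$-modules — a perfect $R$-complex — with $P_\mathfrak{p}\cong G_\mathfrak{p}$ in $\db(R_\mathfrak{p})$. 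The two mutually inverse isomorphisms between $G_\mathfrak{p}$ and $P_\mathfrak{p}$ lift, after clearing denominators (and rescaling $\tilde g$), to morphisms $\tilde h\colon G\to P$ and $\tilde g\colon P\to G$ in $\db(R)$ with $\tilde g\circ\tilde h=s\cdot\mathrm{id}_G$ for a suitable $s\in R\setminus\mathfrak{p}$, using that $\operatorname{Hom}$ between bounded complexes of finitely generated modules commutes with localization. This exhibits $s\cdot\mathrm{id}_G$ as a morphism factoring through the perfect complex $P$, finishing the proof.

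The step I expect to be the main obstacle is this last one — checking that ``multiplication by an element factors through a perfect complex'' is compatible with passing to $R_\mathfrak{p}$. The delicate point is that the $\operatorname{Hom}$-sets of $\ds(R)$ are merely filtered colimits of the finitely generated $\operatorname{Hom}$-sets of $\db(R)$ and need not themselves be finitely generated, so one cannot just ``localize $\ds(R)$'' and read off annihilators; the argument must run one level down, in $\db(R)$, where finite generation is available, with the perfect complexes tracked by hand through the clearing-denominators-and-rescaling device above (equivalently, via the standard fact that $\ds(R_\mathfrak{p})$ is the localization of $\ds(R)$ at $R\setminus\mathfrak{p}$). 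Granting this, the reduction to $G$ via Lemma \ref{4}(2) and the easy inclusion are routine, and the theorem follows.
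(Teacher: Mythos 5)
The paper does not prove this statement at all: it is quoted verbatim from Liu \cite[Theorem 4.6]{liu} and used as a black box, so there is no internal proof to compare against. Judged on its own, your argument is correct and is essentially the standard proof found in the literature (Liu's argument, building on the localization techniques of Iyengar--Takahashi \cite{ua} and Dao--Takahashi). The easy inclusion via ``zero in the Verdier quotient $\Leftrightarrow$ factors through a perfect complex,'' followed by localizing at $\mathfrak p$ to make $a$ a unit and invoking Auslander--Buchsbaum--Serre, is exactly right. For the hard inclusion, the reduction $(\Ann_{\ds(R)}G)^{d+1}\subseteq\Ann\ds(R)\subseteq\Ann_{\ds(R)}G$ via Lemma \ref{4}(2) and induction along $\langle G\rangle_n$ is sound (each power $I^n$ is generated by products $xy$ with $x\in I^{n-1}$, $y\in I$, and $\Ann_{\ds(R)}B$ is an ideal, so checking generators suffices), and you correctly identify the genuinely delicate point: one cannot localize $\ds(R)$ naively because its Hom-sets need not be finitely generated, so the factorization of $s\cdot\mathrm{id}_G$ through a perfect complex must be produced in $\db(R)$, where $\operatorname{Hom}_{\db(R)}(X,Y)$ is a finitely generated $R$-module commuting with localization. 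The only step I would tighten is the clearing-denominators construction of the perfect $R$-complex $P$ with $P_{\mathfrak p}\cong G_{\mathfrak p}$: after clearing denominators the composites $D_{i-1}D_i$ are only killed by localization, so you must multiply every differential by a single $t\notin\mathfrak p$ annihilating all entries of all these composites (then $(tD_{i-1})(tD_i)=t\cdot(tD_{i-1}D_i)=0$), and then note that a bounded complex whose differentials are rescaled by units is isomorphic to the original via a degreewise unit automorphism. With that made explicit, the proof is complete.
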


Therefore, the annihilator of the singularity is not only a homological invariant, but also plays a role geometrically. For instance, it tells us that when the dimension of the singularity category of a commutative Noetherian ring is finite, then the singular locus is a closed subset. 

We start with the finiteness condition on $\dim \ds(R)$ in Liu's theorem. To this end, let us consider the following three conditions on a commutative Noetherian local ring $(R, \m)$:
\begin{description}
\item[A] There exists a nonzerodivisor $x \in \m$ such that $\dim \ds(R/x R) < \infty$ and $x^i \in \Ann \ds(R)$ for some $i > 0$.
\item[B] $\dim \ds(R) < \infty$.
\item[C] For any $x \in \m$, there is a positive integer $\ell_x$ such that $x^{\ell_x} \in \Ann \ds(R)$. 
\end{description} 

Then, Theorem \ref{new-version-of-theorem} immediately tells us that \textbf{A} implies \textbf{B}. If, moreover, $R$ has only isolated singularities, then the above theorem due to Liu tells us that $\Ann \ds(R)$ is $\m$-primary, which tells us that \textbf{B} implies \textbf{C}. Note that the $\ell_x$ are bounded above by the Loewy length $\ell\ell(R/\Ann \ds(R))$ of $R/\Ann \ds(R)$.

For any nonzerodivisor $x \in \m$, let $\ell_x$ be as above and put $d_x = \dim \ds(R/xR)$. Then, we have the following corollary of Theorem \ref{new-version-of-theorem}.

\begin{corollary}
Let $(R,\m)$ be an isolated singularity with $\dim \ds(R) < \infty$. Then, for any nonzerodivisor $x \in \m$, we have
\begin{align*}
    \dim \ds(R) \leq  \ell_x(d_x+1) - 1\le\ell\ell(R/\Ann\ds(R))(\dim\ds(R/xR)+1)-1.
\end{align*}
\end{corollary}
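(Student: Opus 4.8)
The plan is to derive this corollary directly from Theorem \ref{new-version-of-theorem} by making a judicious choice of the exponents $m_i$ and $a_i$ in the single-element situation $n=1$. Fix a nonzerodivisor $x\in\m$. Since $(R,\m)$ is an isolated singularity with $\dim\ds(R)<\infty$, Liu's theorem guarantees that $\Ann\ds(R)$ is $\m$-primary, so there is a positive integer $\ell_x$ with $x^{\ell_x}\in\Ann\ds(R)$; moreover one may take $\ell_x\le\ell\ell(R/\Ann\ds(R))$ because $\m^{\,\ell\ell(R/\Ann\ds(R))}\subseteq\Ann\ds(R)$ and in particular $x^{\,\ell\ell(R/\Ann\ds(R))}\in\Ann\ds(R)$.

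Next I would apply Theorem \ref{new-version-of-theorem} with $n=1$, $x_1=x$, $m_1=\ell_x$, and $a_1=\ell_x$. The hypotheses are met: $x$ is a regular sequence of length one on $R$ since it is a nonzerodivisor, and $x^{m_1}=x^{\ell_x}\in\Ann\ds(R)$, which also makes $x^{a_1}=x^{\ell_x}\in\Ann\ds(R)$. In this case $\omega=m_1\!\left(\tfrac{a_1}{m_1}\right)=\ell_x$, and part (1) of the theorem reads
$$
\dim\ds(R)\le\omega\,(\dim\ds(R/xR)+1)-1=\ell_x(d_x+1)-1,
$$
where $d_x=\dim\ds(R/xR)$, giving the first inequality of the corollary.

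For the second inequality I would simply substitute the bound $\ell_x\le\ell\ell(R/\Ann\ds(R))$ into $\ell_x(d_x+1)-1$; since $d_x+1\ge1>0$, enlarging $\ell_x$ only increases the right-hand side, so $\ell_x(d_x+1)-1\le\ell\ell(R/\Ann\ds(R))(\dim\ds(R/xR)+1)-1$. This completes the proof. There is no real obstacle here beyond bookkeeping: the only point requiring a line of justification is that $\ell_x$ can indeed be chosen $\le\ell\ell(R/\Ann\ds(R))$, which follows from the $\m$-primary property supplied by Liu's theorem (stated in the excerpt as \cite[Theorem 4.6]{liu}) together with the definition of Loewy length; the existence of a finite $\ell_x$ at all is exactly the content of condition \textbf{C}, which the preceding discussion has already shown follows from \textbf{B}.
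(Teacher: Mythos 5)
Your proposal is correct and matches the paper's (implicit) argument exactly: the paper presents this as an immediate consequence of Theorem \ref{new-version-of-theorem} applied with $n=1$, $x_1=x$, $m_1=a_1=\ell_x$ (equivalently part (2) of that theorem), with the second inequality coming from $\ell_x\le\ell\ell(R/\Ann\ds(R))$ via the $\m$-primariness of $\Ann\ds(R)$ supplied by Liu's theorem. Your extra remark justifying why $\ell_x$ can be taken at most the Loewy length is exactly the observation the paper makes just before stating the corollary.
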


\end{document}